\documentclass[11pt]{amsart}

\usepackage{geometry}
\allowdisplaybreaks

\usepackage[dvipsnames]{xcolor}
\usepackage[colorlinks,citecolor=OliveGreen,linkcolor=Mahogany,urlcolor=Plum,pagebackref]{hyperref}
\usepackage[alphabetic]{amsrefs}

\usepackage{amsfonts,amssymb,amscd}
\usepackage{mathtools}
\numberwithin{equation}{section}

\newtheorem{theorem}{Theorem}[section]
\newtheorem{lemma}[theorem]{Lemma}
\newtheorem{proposition}[theorem]{Proposition}
\newtheorem{corollary}[theorem]{Corollary}
\theoremstyle{definition}
\newtheorem{definition}[theorem]{Definition}

\newtheorem{question}[theorem]{Question}

\newtheorem{remark}[theorem]{Remark}

\newcommand{\mld}{{\operatorname{mld}}}
\newcommand{\vol}{{\operatorname{vol}}}
\newcommand{\volhat}{\widehat{\operatorname{vol}}}
\newcommand{\ord}{{\operatorname{ord}}}
\newcommand{\ind}{{\operatorname{ind}}}
\newcommand{\Val}{{\operatorname{Val}}}
\newcommand{\Spec}{{\operatorname{Spec}}}
\newcommand{\Supp}{{\operatorname{Supp}}}
\newcommand{\C}{\mathbb C}
\newcommand{\Q}{\mathbb Q}
\newcommand{\R}{\mathbb R}
\newcommand{\Z}{\mathbb Z}

\begin{document}

\title{A Sharp Inequality between Local Volumes and Minimal Log Discrepancies}
\author{Jingjun Han}

\subjclass[2020]{Primary 14B05; Secondary 14E30, 14J45}
\keywords{local volume, minimal log discrepancy, Koll\'ar component, K-stability, ACC}
\begin{abstract}
We answer a question of Li--Liu--Xu:
every $n$-dimensional klt germ $x\in(X,\Delta)$, where $n\ge2$,
satisfies the sharp inequality
\[
  \widehat{\operatorname{vol}}(x,X,\Delta)\le n^{n-1}\operatorname{mld}_x(X,\Delta),
\]
with equality if and only if $\Delta=0$ near $x$, and analytically,
$(x\in X)\cong\frac{1}{r}(1,\ldots,1)$ for some $r\ge1$.  We also prove
that, in fixed dimension and with coefficients in a fixed finite set,
$\volhat/\mld$ is discrete away from zero.  As applications of the sharp
inequality, we obtain lower bounds for minimal log discrepancies of log
Fano pairs.
\end{abstract}

\address{Shanghai Center for Mathematical Sciences \& School of Mathematical Sciences, Fudan University, Shanghai 200438, China}
\email{hanjingjun@fudan.edu.cn}

\maketitle

\section{Introduction}

We work over the field of complex numbers $\C$.

The normalized volume was introduced by C.~Li \cite{Li18} with
motivation from K-stability.  Its minimum, the local volume of a klt germ, is
central to the local K-stability theory for klt singularities.  In particular, a
minimizer induces a K-semistable log Fano cone degeneration of the germ
\cite[Theorem~1.2]{XZ25}. Local volumes also connect singularities to global
Fano geometry.  Liu proved that the local volume at every closed point of a K-semistable Fano variety is bounded from below in terms of its anticanonical volume \cite[Theorem~2]{Liu18}.  Thus an upper bound for the local volume in terms of a classical singularity invariant gives uniform restrictions on the singularities of such Fano varieties.

Minimal log discrepancies are among the basic invariants of singularities in
the minimal model program.  The ACC conjecture for mlds is closely related to the
termination of flips and the existence of minimal models; see, for example,
\cites{Sho04,HL25}.  To compare local volume with minimal log
discrepancy, Li--Liu--Xu proved
\[
  \volhat(x,X,\Delta)<n^n\mld_x(X,\Delta)
\]
\cite[Theorem~6.13]{LLX20}.  Combined with \cite[Theorem~2]{Liu18},
this gives a lower bound for the minimal log discrepancy and leads to the boundedness of K-semistable log Fano varieties \cite[Corollary~6.14]{LLX20}.  They asked whether, when $\Delta=0$, the
coefficient $n^n$ can be replaced by $n^{n-1}$
\cite[Question~6.16]{LLX20}.  Liu proved this in dimension three
\cite[Theorem~1.4]{Liu25} and asked whether the same inequality holds for klt
threefold pairs \cite[Question~5.5]{Liu25}.

Our main theorem answers \cite[Question~6.16]{LLX20} and \cite[Question~5.5]{Liu25}.

\begin{theorem}\label{thm:main}
Let $x\in(X,\Delta)$ be an $n$-dimensional klt germ, where $n\ge2$.  Then
\begin{equation}\label{eq:main}
  \volhat(x,X,\Delta)
  \le n^{n-1}\mld_x(X,\Delta),
\end{equation}
with equality if and only if $\Delta=0$ near $x$, and analytically, $(x\in X)\cong\frac{1}{r}(1,\ldots,1)$ for some integer $r\ge1$.
\end{theorem}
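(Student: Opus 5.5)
We plan to reduce to a Kollár component computing the minimal log discrepancy and then estimate the local volume from above by the normalized volume of that component.

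\emph{Step 1: extract a Kollár component computing the mld.} Take a divisor $E$ over $X$ with center $x$ and $A_{X,\Delta}(E)=\mld_x(X,\Delta)$. By the standard argument (a dlt modification followed by running an MMP, as in the construction of Kollár components), we may replace $E$ by a plt blow-up $\pi\colon Y\to X$ with exceptional divisor $E$ such that $A_{X,\Delta}(E)\le \mld_x(X,\Delta)$. Then
\[
\volhat(x,X,\Delta)\le A_{X,\Delta}(E)^n\,\vol(\ord_E)=A_{X,\Delta}(E)^{n}\,(-E|_E)^{n-1}.
\]
It therefore suffices to show $A^{n-1}(-E|_E)^{n-1}\le n^{n-1}$ with $A=A_{X,\Delta}(E)$. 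This is not automatic, so we instead use the adjunction structure. Write $(E,\Delta_E)$ for the different, so that $-(K_E+\Delta_E)=-(K_Y+\Delta_Y+E)|_E\equiv A\cdot(-E|_E)$ is ample, where $\Delta_Y$ is the strict transform of $\Delta$. Then $A^{n-1}(-E|_E)^{n-1}=\vol(-(K_E+\Delta_E))$.

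\emph{Step 2: bound the anticanonical volume.} The key point is that $(E,\Delta_E)$ is an $(n-1)$-dimensional klt log Fano pair. If $E$ computes the mld, its log canonical thresholds are constrained. The naive bound would give $\vol(-K_E)\le n^{n-1}$, which holds for K-semistable Fanos (Fujita, Liu), with equality only for $\mathbb P^{n-1}$; but $E$ need not be K-semistable. Instead we use the freedom in choosing $E$ together with the local-volume minimizer. Let $v$ minimize $\volhat$; it is quasi-monomial and lies in a simplex of a log resolution. We compare $v$ with divisorial valuations near it via the inequality $\volhat(v)\le A(v)^n\vol(v)$ and the scaling relation $\vol(v)\ge 1/\ldots$. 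Concretely, we would try to show that for a minimizer $v$ normalized by $A(v)=1$, one has $v(\mathfrak m_x)\cdot\ldots$, and that $\mld\ge A(E)$ for $E$ a divisor with $\ord_E\ge c\,v$, using $\vol(\ord_E)\le$... The cleaner route I will pursue is the following. Take the Kollár component $E$ degenerating the germ to the cone $C=C(E,-E|_E)$ (an orbifold cone). Then $\mld_x(X,\Delta)\ge\mld$ of the cone vertex by lower semicontinuity of mld under this degeneration (to be justified via inversion of adjunction on the deformation to the normal cone). Moreover $\volhat$ is upper semicontinuous, so $\volhat(x,X,\Delta)\le\volhat$ of the cone. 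This reduces the theorem to cones over log Fano pairs, where both sides are computable.

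\emph{Step 3: the cone case.} For the cone $C$ over $(V,\Delta_V)$ with polarization $L$ satisfying $-(K_V+\Delta_V)=aL$, the canonical valuation has $A=a$ and volume $L^{n-1}$. The mld at the vertex is at least $\min\bigl(a,\ \text{contributions of divisors over } V\bigr)$, and it is at most $a$. We need $\volhat\le n^{n-1}\mld$. Using $\volhat\le a^n L^{n-1}=a\,\vol(-(K_V+\Delta_V))$ when $\mld=a$, we need $\vol(-(K_V+\Delta_V))\le n^{n-1}$. This fails for general Fanos (for example $\vol(-K_V)$ can be large), so we must instead use weighted valuations: for any divisor $F$ over $V$, a quasi-monomial combination of $\ord_V$ and $F$ gives a competitor, and optimizing the weights yields $\volhat\le n^{n-1}\cdot\min(a,\ldots)$ via the AM--GM-type inequality $\min_t (a+tA_V(F))^n/(\ldots)$. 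The case $V=\mathbb P^{n-1}$, $L=\mathcal O(r)$ gives exactly $\tfrac1r(1,\ldots,1)$ and equality. This optimization step, which must handle non-K-semistable $V$ via an $\alpha$/$\delta$-invariant bound of the type $\vol(-K_V)\le(\tfrac{n}{\delta})^{n-1}\ldots$, is the main obstacle. I would first try Liu's volume bound $\delta(V)^{n-1}\vol(-K_V)\le n^{n-1}$ combined with the fact that a divisor computing $\delta(V)$ yields a divisor over the vertex with log discrepancy controlled by $\delta$. Equality tracking then forces $\delta=1$, $\Delta=0$, and $V\cong\mathbb P^{n-1}$.
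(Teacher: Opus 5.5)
Your proposal does not reach a proof. Step~3 explicitly leaves the decisive inequality open, and the two reductions that feed into it are not valid.

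First, a Koll\'ar component computing the mld is the wrong divisor to anchor on. The dlt-modification/MMP construction produces some Koll\'ar component, but it gives no control of its log discrepancy against $\mld_x(X,\Delta)$. Worse, even a Koll\'ar component that does compute the mld can violate the bound you want. On $\frac{1}{m+2}(1,1,m)$, the divisor $V\cong\mathbb P(1,1,m)$ given by the weight vector $\frac{1}{m+2}(1,1,m)$ (the canonical valuation of the cone over $\mathbb P(1,1,m)$ with respect to $\mathcal O(m+2)$) is a Koll\'ar component with $A(V)=1=\mld$. Yet $\volhat(\ord_V)=(-K_V)^2=(m+2)^2/m>9$ for $m\ge5$.

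Second, Step~2 uses semicontinuity in the wrong direction. The local volume is \emph{lower} semicontinuous (Blum--Liu), so the central fiber of the degeneration induced by $E$ may have strictly smaller local volume than $x\in X$. For $x^2+y^2+z^2+w^3=0$, the exceptional divisor of the ordinary blow-up is a Koll\'ar component with $A=2=\mld$. Its cone is $\{x^2+y^2+z^2=0\}\cong\frac12(1,1,0)$, with local volume $27/2$, whereas $\volhat(x,X)=125/9$. In the direction you need, the only valid comparison is $\volhat(x,X,\Delta)\le\volhat_{X,\Delta}(\ord_E)$, and the first example shows that bounding the right-hand side by $n^{n-1}\mld$ can fail even when $E$ computes the mld. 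The mld comparison you propose would also require a precise inversion of adjunction that is not known in general; the correct argument does not need it.

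Two ideas are missing. (i) Use divisors approximating the normalized volume minimizer $v_*$, not the mld divisor. By the Xu--Zhuang refinement of Li--Xu, every divisorial valuation close to $v_*$ in an adapted quasi-monomial cone is proportional to $\ord_S$ for a Koll\'ar component $S$ with $\delta(S,\Delta_S)\ge1-\varepsilon$, and trivially $\volhat(x,X,\Delta)\le\volhat_{X,\Delta}(\ord_S)$. This supplies the near K-semistability you were trying to manufacture by optimizing weights on a cone.

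(ii) The global bound $\delta^{n-1}\bigl(-(K_S+\Delta_S)\bigr)^{n-1}\le n^{n-1}$ that you quote only yields $\delta^{n-1}\volhat_{X,\Delta}(\ord_S)\le n^{n-1}A_{X,\Delta}(S)$, and $A_{X,\Delta}(S)$ can far exceed the mld. For $\frac15(1,2)$ the minimizer is $\ord_S$ with $\delta(S,\Delta_S)=1$ and $A(S)=2$, while $\mld=3/5$. What is needed is an index correction on both sides.
\begin{itemize}
\item Let $I(S)$ be the maximal Cartier index of $S$ on the plt blow-up $Y$, attained at $p\in S$. The index-one cover of $S$ near $p$ restricts to a crepant cyclic cover of degree $I(S)$ of $p\in(S,\Delta_S)$, so $\volhat(p,S,\Delta_S)\le(n-1)^{n-1}/I(S)$.
\item Applying the local-to-global estimate at this particular point gives $\delta^{n-1}\volhat_{X,\Delta}(\ord_S)\le n^{n-1}A_{X,\Delta}(S)/I(S)$. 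This is an equality in the $\frac15(1,2)$ example, where $I(S)=5$.
\item On the other side, every prime divisor $F\ne S$ over $x$ satisfies $A_{X,\Delta}(F)=A_{Y,S+\Delta_Y}(F)+A_{X,\Delta}(S)\ord_F(S)>A_{X,\Delta}(S)/I(S)$, because $\ord_F(S)\ge1/I(S)$. Hence $A_{X,\Delta}(S)/I(S)\le\mld_x(X,\Delta)$.
\end{itemize}
Letting $\varepsilon\to0$ then gives the inequality.

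The equality case also needs more than ``equality tracking''. You must show that a divisor computing the mld is itself a minimizer, using these approximating components and the equality case of Blum's inequality $A(v_*)\le A(w)/w(\mathfrak a_\bullet(v_*))$. You then have to pass from the cone back to the germ: by Schlessinger rigidity for $n\ge3$, and by the classification of cyclic quotient surface singularities for $n=2$.
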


Theorem~\ref{thm:main} has the following corollary. 

\begin{corollary}\label{cor:mld-one}
Let $x\in(X,\Delta)$ be an $n$-dimensional klt germ, where $n\ge2$, and
assume that $\mld_x(X,\Delta)\le1$.  Then
\[
  \volhat(x,X,\Delta)\le n^{n-1},
\]
with equality if and only if $\Delta=0$ near $x$, and analytically, $(x\in X)\cong\frac{1}{n}(1,\ldots,1)$.
\end{corollary}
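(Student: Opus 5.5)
The corollary follows almost formally from Theorem~\ref{thm:main}, combined with one explicit computation for the cyclic quotient singularities $\frac1r(1,\ldots,1)$.

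\emph{The inequality.} By \eqref{eq:main} and the hypothesis $\mld_x(X,\Delta)\le1$,
\[
\volhat(x,X,\Delta)\le n^{n-1}\mld_x(X,\Delta)\le n^{n-1}.
\]

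\emph{Equality forces a cyclic quotient.} Suppose $\volhat(x,X,\Delta)=n^{n-1}$. Then both inequalities in the chain above are equalities. So $\mld_x(X,\Delta)=1$, and equality holds in \eqref{eq:main}. By the equality case of Theorem~\ref{thm:main}, $\Delta=0$ near $x$ and $(x\in X)\cong\frac1r(1,\ldots,1)$ for some integer $r\ge1$.

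\emph{Pinning down $r$.} It remains to show that $r=n$, which comes from computing $\mld$ for these singularities. Let $X_r=\C^n/\mu_r$ with the diagonal action of weights $(1,\ldots,1)$. The germ $X_r$ is toric, or equivalently the cone over the $r$-th Veronese embedding of $\mathbb P^{n-1}$. The blow-up of the vertex (the weighted blow-up with weights $\frac1r(1,\ldots,1)$) has exceptional divisor $E\cong\mathbb P^{n-1}$, and its log discrepancy is $A_{X_r}(E)=n/r$.

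For a toric germ, the minimal log discrepancy over the closed point is the minimum of the linear log discrepancy function over the interior lattice points of the cone $\R_{\ge0}^n$. Here the lattice is $N=\Z^n+\Z\cdot\frac1r(1,\ldots,1)$ and the function is the sum of coordinates. An interior lattice point has the form $v=m+\frac kr(1,\ldots,1)$ with $m\in\Z^n$ and $0\le k<r$, and all coordinates of $v$ positive.
\begin{itemize}
\item If $k=0$, then $v\in\Z^n$ with all coordinates at least $1$, so the sum is at least $n$.
\item If $k>0$, the sum is at least $nk/r\ge n/r$, with equality at $v=\frac1r(1,\ldots,1)$.
\end{itemize}
Hence $\mld_x(X_r)=\min\{n/r,\,n\}$, which equals $n/r$ for $r\ge1$.

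Since $\mld_x(X_r)=1$, we get $n/r=1$, i.e.\ $r=n$.

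\emph{Converse.} If $\Delta=0$ near $x$ and $(x\in X)\cong\frac1n(1,\ldots,1)$, then $\mld_x(X)=n/n=1$, so the hypothesis $\mld_x\le1$ is satisfied. This germ is of the form allowed in the equality case of Theorem~\ref{thm:main}, so equality holds in \eqref{eq:main}, giving $\volhat(x,X)=n^{n-1}\cdot1=n^{n-1}$.

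As a consistency check, directly: $\volhat(\frac1r(1,\ldots,1))=n^n/r$, which is $n^{n-1}$ exactly when $r=n$.

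The only step that is not purely formal is the identification $\mld_x(\frac1r(1,\ldots,1))=n/r$, in particular the claim that no toric divisor has smaller log discrepancy. The toric lattice-point minimization above settles this.
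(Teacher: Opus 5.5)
Your proposal is correct and follows the paper's proof essentially verbatim. It uses the chain $\volhat\le n^{n-1}\mld\le n^{n-1}$, then the equality case of Theorem~\ref{thm:main} to reduce to $\frac1r(1,\ldots,1)$, then the toric computation $\mld=n/r$ to force $r=n$, and the same converse. Your lattice-point argument simply spells out the calculation the paper cites from Ambro. Transferring it to $x\in X$ uses invariance of $\mld$ under analytic isomorphism, which the paper also invokes via Masek.
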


For a Gorenstein canonical non-hypersurface threefold singularity, the
classification in \cite[Theorems~5.34--5.35]{KM98}, as recalled in
\cite[Theorem~2.19]{Liu25}, gives a prime divisor $E$ over $x\in X$ such that
$A_X(E)=1$.  Since $x\in X$ is canonical, $A_X(F)\ge1$ for every prime divisor
$F$ over $X$ centered at $x$.  Hence
$\mld_x(X)=1$, and Corollary~\ref{cor:mld-one} recovers \cite[Theorem~1.1]{Liu25}, including the equality
case $\frac{1}{3}(1,1,1)$.

Local volumes $\volhat(x,X)$ in fixed dimension are bounded above by $n^n$ \cite[Lemma~A.1]{LX19} and are discrete away from zero
\cite[Theorem~1.2]{XZ24}.  By Theorem~\ref{thm:main},
$\frac{\volhat(x,X,\Delta)}{\mld_x(X,\Delta)}\le n^{n-1}$.  We next study the possible values of these quotients.  For an integer $n\ge2$ and a set
$\Gamma\subset[0,1]$, set
\[
  \mathcal R_{n,\Gamma}
  :=\left\{
  \frac{\volhat(x,X,\Delta)}{\mld_x(X,\Delta)}
  \ \middle|\ 
  \begin{array}{c}
  x\in(X,\Delta)\text{ is an }n\text{-dimensional klt germ},\\[-2pt]
  \operatorname{Coeff}(\Delta)\subseteq\Gamma
  \end{array}
  \right\}.
\]

\begin{theorem}\label{thm:ratio-acc}
Fix an integer $n\ge2$ and a finite set $\Gamma\subset[0,1]$.  For every $\varepsilon>0$, the set
\[
  \mathcal R_{n,\Gamma}\cap[\varepsilon,\infty)
\]
is finite.  In particular, $\mathcal R_{n,\Gamma}$ is discrete away from zero and satisfies the ACC.
\end{theorem}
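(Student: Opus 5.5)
The plan is to reduce Theorem~\ref{thm:ratio-acc} to two finiteness statements, one for local volumes and one for minimal log discrepancies. Both become available once $\volhat(x,X,\Delta)$ is bounded away from zero. Throughout, fix $n$, the finite set $\Gamma$ and $\varepsilon>0$, and consider germs with $\volhat(x,X,\Delta)/\mld_x(X,\Delta)\ge\varepsilon$.

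\emph{Step 1 (a priori bounds).} Since $\volhat(x,X,\Delta)\le\volhat(x,X)\le n^n$ by \cite[Lemma~A.1]{LX19}, the assumption gives
\[
\mld_x(X,\Delta)\le n^n/\varepsilon .
\]
Theorem~\ref{thm:main} gives $\mld_x(X,\Delta)\ge n^{1-n}\volhat(x,X,\Delta)$. Hence it is enough to show that $\volhat(x,X,\Delta)\ge\delta$ for some $\delta=\delta(n,\Gamma,\varepsilon)>0$. Once this holds, both invariants lie in compact intervals inside $(0,\infty)$.

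\emph{Step 2 (lower bound for the volume; the main obstacle).} I would argue by contradiction. Take a sequence of germs with $\volhat_i\to0$, so that also $\mld_i\le\volhat_i/\varepsilon\to0$.
\begin{itemize}
\item Pass to finite quasi-\'etale covers $(x_i'\in X_i',\Delta_i')\to(x_i\in X_i,\Delta_i)$ of degree $d_i$. I would take the covers associated with the finite local fundamental group, using the Xu--Zhuang bound $d_i\cdot\volhat_i\le n^n$ as a guide.
\item Under such a cover, $\volhat$ multiplies by $d_i$.
\item If $E$ computes $\mld_i$ and $E'$ is a divisor over $X_i'$ dominating $E$, then $A(E')=e\,A(E)$ with ramification index $e\le d_i$. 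So the ratio upstairs is at least $\varepsilon$.
\item The aim is to choose the covers so that $\volhat(x_i',X_i',\Delta_i')$ stays bounded below, or so that the Cartier index of $K_{X_i'}$ becomes bounded. In either case $\mld_i'$ is bounded below, and using $e\le d_i$ we would get $\mld_i\ge c/d_i$ with $c>0$ fixed.
\item One then plays this bound against $d_i\volhat_i\le n^n$ to contradict $\mld_i\le\volhat_i/\varepsilon$.
\end{itemize}
This is where I expect the real difficulty. Making the comparison between the degree of the cover and the drop in $\mld$ sharp enough is delicate. If it fails, my fallback is to take a Koll\'ar component $E$ computing (or nearly computing) $\volhat$. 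I would then bound $A_{X,\Delta}(E)$ below by a multiple of $\mld$, and bound $\vol(\ord_E)$ above by the volume of the log Fano pair on $E$. Boundedness of that pair would then force $\volhat\ge\delta$.

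\emph{Step 3 (finiteness).} Now assume $\volhat\ge\delta$. By \cite[Theorem~1.2]{XZ24}, the set of such local volumes with coefficients in $\Gamma$ is finite. Also, germs with $\volhat\ge\delta$ have Weil divisors whose Cartier indices are bounded by some $N$. Hence for every $E$ over $x$ we have $A_X(E)\in\frac1N\Z$ and $\ord_E\Delta_j\in\frac1N\Z_{\ge0}$, so
\[
\mld_x(X,\Delta)\in\Big\{\,a-\sum_j b_jm_j \ \Big|\ a,m_j\in\tfrac1N\Z_{\ge0},\ b_j\in\Gamma\,\Big\}.
\]
To get finitely many values, I would show that the $E$ computing $\mld$ can be chosen with $A_X(E)$ bounded. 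This uses $\mld\le n^n/\varepsilon$ together with boundedness of the Koll\'ar component or the log Fano cone degeneration from \cite{XZ24}, so that only finitely many tuples $(a,m_j)$ occur. Combining the finite set of volumes with the finite set of mlds gives that $\mathcal R_{n,\Gamma}\cap[\varepsilon,\infty)$ is finite. Discreteness away from zero and the ACC follow immediately.
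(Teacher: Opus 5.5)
\textbf{There is a genuine gap: Step~2 cannot be completed, because the statement it aims at is false.} There is no $\delta(n,\Gamma,\varepsilon)>0$ with $\volhat(x,X,\Delta)\ge\delta$ on the germs you consider.

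Take the cyclic quotients $(0\in X_r)=\frac{1}{r}(1,\ldots,1)$ with $\Delta=0$, which are allowed for every $\Gamma$. Then $\volhat(0,X_r)=n^n/r$ and $\mld_0(X_r)=n/r$. So $\volhat/\mld=n^{n-1}\ge\varepsilon$ for every $r$ once $\varepsilon\le n^{n-1}$, while $\volhat(0,X_r)\to0$.

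Your heuristic already shows why no contradiction can appear. The bounds $\mld_i\ge c/d_i$ and $\mld_i\le\volhat_i/\varepsilon\le n^n/(\varepsilon d_i)$ are compatible. The Koll\'ar-component fallback fails for the same reason.

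More broadly, you cannot hope to bound the set of volumes and the set of mlds separately and combine two finite sets. Both are infinite here, and only the ratio is finite.

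Step~3 has further problems of its own:
\begin{itemize}
\item Only $K_X+\Delta$ is assumed $\R$-Cartier. So $A_X(E)$ and $\ord_E\Delta_j\in\frac1N\Z$ are not available in general.
\item The set $\{a-\sum_j b_jm_j\}$ with $a,m_j\in\frac1N\Z_{\ge0}$ is dense when some $b_j\in\Gamma$ is irrational. You would need $a$ and the $m_j$ bounded, and you leave that unproved.
\end{itemize}

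\textbf{The missing idea is to use a cover to normalize the ratio, not to bound $\volhat$ from below.}
\begin{itemize}
\item By the uniform rational polytope \cite[Theorem~5.6 and Lemma~5.4]{HLS26}, write $\Delta=\sum_j\lambda_j\Delta^{(j)}$. Here $\lambda_j>0$ and $q$ depend only on $(n,\Gamma)$, the $(X,\Delta^{(j)})$ are lc, and each $q(K_X+\Delta^{(j)})$ is an integral $\Q$-Cartier Weil divisor.
\item Take the abelian quasi-\'etale cover $f\colon y\in(Y,\Delta_Y)\to x\in(X,\Delta)$ of degree $N$ that makes all of these Cartier (Lemma~\ref{lem:index-cover}).
\item Let $E$ compute the mld, and let $G$ lie over $E$ with ramification index $e\mid N$. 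Then $N\mld_x(X,\Delta)=\sum_j\lambda_j\frac{N}{e}A_{Y,\Delta^{(j)}_Y}(G)$ with $A_{Y,\Delta^{(j)}_Y}(G)\in\frac1q\Z_{\ge0}$. Hence $N\mld_x(X,\Delta)\in\Lambda:=\sum_j\frac{\lambda_j}{q}\Z_{\ge0}$, which is finite in bounded intervals because its generators are positive.
\item By the finite degree formula, $\volhat(x,X,\Delta)/\mld_x(X,\Delta)=\volhat(y,Y,\Delta_Y)/a$ with $a:=N\mld_x(X,\Delta)$.
\item Since $\volhat(y,Y,\Delta_Y)\le n^n$ and the ratio is at least $\varepsilon$, $a$ lies in the finite set $\Lambda\cap(0,n^n/\varepsilon]$.
\item Then $\volhat(y,Y,\Delta_Y)\in[\varepsilon a,n^n]$. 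Since $\operatorname{Coeff}(\Delta_Y)\subseteq\Gamma$, it takes only finitely many values by \cite[Theorem~1.2]{XZ24}.
\end{itemize}
This argument uses neither Theorem~\ref{thm:main} nor any lower bound on $\volhat(x,X,\Delta)$.
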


More generally, Proposition~\ref{prop:acc-threshold} proves the same
finiteness for $\volhat/\mld^c$ for every $0<c\le1$, whereas the ACC
fails for $c>1$.

Note that the desired finiteness is not a formal
consequence of the discreteness of local volumes: minimal log discrepancies may approach zero, and taking quotients may create new accumulation behavior.

\medskip
\noindent\emph{Applications.}
For a log Fano pair, the local-to-global estimate recalled in
Lemma~\ref{lem:local-global} gives
\[
  \volhat(x,X,\Delta)
  \ge
  \left(\frac{n}{n+1}\right)^n
  \delta(X,\Delta)^n
  \bigl(-(K_X+\Delta)\bigr)^n
\]
at every closed point, where $\delta(X,\Delta)$ denotes the $\delta$-invariant.  Combining this estimate with Theorem~\ref{thm:main} gives the following consequence.

\begin{corollary}\label{cor:kmoduli}
Let $(X,\Delta)$ be an $n$-dimensional log Fano pair, where $n\ge2$.  Then every closed point $x\in X$ satisfies
\begin{equation}\label{eq:kmoduli-mld}
  \mld_x(X,\Delta)
  \ge
  \frac{n}{(n+1)^n}\delta(X,\Delta)^n
  \bigl(-(K_X+\Delta)\bigr)^n.
\end{equation}
In particular, if $(X,\Delta)$ is K-semistable, then
\[
  \mld_x(X,\Delta)
  \ge
  \frac{n}{(n+1)^n}\bigl(-(K_X+\Delta)\bigr)^n.
\]
\end{corollary}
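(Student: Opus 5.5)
The corollary will follow by chaining the local-to-global estimate of Lemma~\ref{lem:local-global} with the sharp inequality \eqref{eq:main} of Theorem~\ref{thm:main}. I fix a closed point $x\in X$. Since $(X,\Delta)$ is log Fano, it is klt, so $x\in(X,\Delta)$ is an $n$-dimensional klt germ. Hence Theorem~\ref{thm:main} applies and gives $\volhat(x,X,\Delta)\le n^{n-1}\mld_x(X,\Delta)$. Lemma~\ref{lem:local-global} gives the lower bound
\[
\volhat(x,X,\Delta)\ge\Bigl(\frac{n}{n+1}\Bigr)^n\delta(X,\Delta)^n\bigl(-(K_X+\Delta)\bigr)^n .
\]
Combining the two bounds and dividing by $n^{n-1}$ gives
\[
\mld_x(X,\Delta)\ge\frac{n^n}{n^{n-1}(n+1)^n}\delta(X,\Delta)^n\bigl(-(K_X+\Delta)\bigr)^n=\frac{n}{(n+1)^n}\delta(X,\Delta)^n\bigl(-(K_X+\Delta)\bigr)^n,
\]
which is \eqref{eq:kmoduli-mld}.

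For the K-semistable statement, I will invoke the standard valuative criterion of Fujita and Li, together with the theorem of Blum--Jonsson identifying $\delta$ with the infimum of $A/S$: for a log Fano pair, $(X,\Delta)$ is K-semistable if and only if $\delta(X,\Delta)\ge1$. Then $\delta(X,\Delta)^n\ge1$, and since $\bigl(-(K_X+\Delta)\bigr)^n>0$, the general bound specializes to $\mld_x(X,\Delta)\ge\frac{n}{(n+1)^n}\bigl(-(K_X+\Delta)\bigr)^n$.

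There is no real obstacle here, since all the content lies in Theorem~\ref{thm:main} and Lemma~\ref{lem:local-global}. The only points to check are bookkeeping. First, $\mld_x$ must mean the minimal log discrepancy at the closed point $x$, matching the convention in Theorem~\ref{thm:main}. Second, Lemma~\ref{lem:local-global} must hold at every closed point with the constant $(n/(n+1))^n$ as stated; this is Liu's estimate, extended to pairs and to $\delta<1$ by rescaling.
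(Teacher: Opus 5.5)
Your proof is correct and is the same as the paper's: combine the lower bound from Lemma~\ref{lem:local-global} with \eqref{eq:main}, then use $\delta(X,\Delta)\ge1$ in the K-semistable case. One caveat: $\Delta$ may have real coefficients, and the Fujita--Li criterion is stated for rational ones, so cite the real-coefficient equivalence between K-semistability and $\delta\ge1$, namely \cite[Theorem~2.6]{LZ24}, which is what the paper uses.
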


The coefficient $\frac{n}{(n+1)^n}$ in Corollary~\ref{cor:kmoduli} is optimal,
as equality holds for $(X,\Delta)=(\mathbb P^n,0)$ at every closed
point.

Thus, after fixing the dimension and anticanonical volume and imposing a
positive lower bound for the $\delta$-invariant, the minimal log
discrepancies at closed points have a uniform positive lower bound.  This
coefficient is $n$ times the coefficient obtained by combining the
Li--Liu--Xu inequality with the same local-to-global estimate.  The
threefold results in Section~\ref{sec:applications} illustrate how this general mld
bound can be combined with dimension-specific classifications from
\cite[Theorem~1.2 and Propositions~4.3--4.4]{Liu25}.  They also give bounds
for the boundary; see
Corollaries~\ref{cor:threefold-boundary} and~\ref{cor:boundary-restriction}.

\medskip
\noindent\emph{Acknowledgments.}
JH would like to thank Guodu Chen, Chen Jiang, Yuchen Liu, Yujie Luo, Lu Qi, Lingyao Xie, and Chenyang Xu, Ziquan Zhuang for fruitful discussions. This work was supported by the National Key R\&D Program
of China (No.~2025YFA1018100, No.~2023YFA1010600) and the NSFC for Excellent Young Scientists (No. 12322102). JH is a member of LMNS,
Fudan University. JH thanks ChatGPT 5.6 Pro for suggesting the semicontinuity examples in
Remark~\ref{rem:semicontinuity} and for assistance with English editing.

\section{Preliminaries}\label{sec:preliminaries}

\subsection{Local invariants and minimizers}

We follow the standard notation and conventions of \cite{KM98} for
singularities of pairs.

\begin{definition}[Pairs and germs]\label{def:pairs-germs}
A \emph{pair} $(X,\Delta)$ consists of a normal quasi-projective variety $X$ together with an $\R$-divisor $\Delta\ge 0$ such that $K_X+\Delta$ is $\R$-Cartier. If the coefficients of $\Delta$ are $\leq1$, then $\Delta$ is called a boundary on $X$.

For a prime divisor $F$ over $X$, we write $A_{X,\Delta}(F)$ for its log discrepancy.  A germ $x\in(X,\Delta)$ consists of a pair and a closed point $x\in X$; it is klt if the pair is klt in a neighborhood of $x$.  We say that a prime divisor $F$ is over $X\ni x$ if $F$ is over $X$ and its center on $X$ is $x$.
\end{definition}

Let $\Val_X$ denote the set of real valuations of $K(X)$ that are
trivial on $\C$ and admit a center on $X$.  For $v\in\Val_X$, we
denote its center by $c_X(v)$.  If $x\in X$ is a closed point, set
\[
  \Val_{X,x}:=\{v\in\Val_X\mid c_X(v)=x\}.
\]
We recall the
extension of log discrepancy to this space from \cite[Sections~4--5]{JM12}
and \cite{BdFFU15}.  A log smooth model $\mu\colon(Y,D)\to X$ of
$(X,\Delta)$ consists of a proper birational morphism from a smooth variety
$Y$ and a reduced simple normal crossing divisor $D$ containing the
exceptional locus of $\mu$ and the support of the strict transform of
$\Delta$.  We write $\operatorname{QM}(Y,D)$ for the quasi-monomial
valuations determined by the strata of $D$.  There is a canonical retraction $r_{Y,D}\colon\Val_X\to\operatorname{QM}(Y,D)$ as in \cite[Section~4.3]{JM12}.

\begin{definition}[Log discrepancy of valuations]
For a divisorial valuation $v=\lambda\ord_E$, set
$A_{X,\Delta}(v):=\lambda A_{X,\Delta}(E)$.  Let $v_\alpha\in\operatorname{QM}(Y,D)$ be described at a stratum by components $D_1,\ldots,D_s$ of $D$ and weights $\alpha_1,\ldots,\alpha_s\in\R_{\ge0}$, not all zero.  Set
\[
  A_{X,\Delta}(v_\alpha)
  :=\sum_{i=1}^s\alpha_iA_{X,\Delta}(D_i).
\]
For $v\in\Val_X$, define
\[
  A_{X,\Delta}(v)
  :=\sup_{(Y,D)}
  A_{X,\Delta}\bigl(r_{Y,D}(v)\bigr)
  \in\R\cup\{+\infty\},
\]
where the supremum is taken over the log smooth models dominating $(X,\Delta)$.  We write $A_X(v):=A_{X,0}(v)$.
\end{definition}

\begin{definition}[Local invariants]\label{def:local-invariants}
Let $x\in(X,\Delta)$ be an $n$-dimensional klt germ and let $v\in\Val_{X,x}$.  Its valuation ideals and volume are
\[
  \mathfrak a_m(v)
  :=\{f\in\mathcal O_{X,x}\mid v(f)\ge m\},
  \qquad
  \vol_{X,x}(v)
  :=\lim_{m\to\infty}
  \frac{\ell\bigl(\mathcal O_{X,x}/\mathfrak a_m(v)\bigr)}{m^n/n!},
\]
where $\ell$ denotes length; see \cite{ELS03}.  Define
\[
  \volhat_{X,\Delta}(v)
  :=
  \begin{cases}
    A_{X,\Delta}(v)^n\vol_{X,x}(v),
      & A_{X,\Delta}(v)<+\infty,\\
    +\infty,
      & A_{X,\Delta}(v)=+\infty.
  \end{cases}
\]
The local volume of the germ and the minimal log discrepancy at $x$ are
\[
  \volhat(x,X,\Delta)
  :=\inf_{v\in\Val_{X,x}}\volhat_{X,\Delta}(v),
  \qquad
  \mld_x(X,\Delta)
  :=\inf_F A_{X,\Delta}(F),
\]
where $F$ runs over prime divisors over $X\ni x$.
\end{definition}

We say that klt singularities $x\in X$ and $x'\in X'$ are analytically
isomorphic if $\widehat{\mathcal O}_{X,x}\cong\widehat{\mathcal O}_{X',x'}$.
Then $\volhat(x,X)=\volhat(x',X')$ by \cite[Proposition~2.24]{HLQ23}, and
$\mld_xX=\mld_{x'}X'$ by \cite[Proposition~6]{Mas01}.

By \cite[Lemma~3.5]{CH21}, the infimum defining the minimal log discrepancy is attained by a prime divisor.

The existence of a normalized volume minimizer was proved by Blum over an uncountable ground field \cite[Main Theorem and Section~7]{Blu18} and by Xu over an arbitrary algebraically closed field \cite[Remark~3.8]{Xu20}. Xu also proved quasi-monomiality \cite[Theorem~1.2]{Xu20}, and Xu--Zhuang proved uniqueness up to positive scaling \cite[Theorem~1.1]{XZ21}. For pairs, see \cite[Theorems~3.3 and~3.4]{HLQ23}. 

\begin{theorem}\label{thm:minimizer}
Let $x\in(X,\Delta)$ be a klt germ.  The normalized volume functional
$\volhat_{X,\Delta}$ on $\Val_{X,x}$ admits a quasi-monomial minimizer,
unique up to positive scaling.
\end{theorem}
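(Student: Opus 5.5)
This is a compilation statement: Theorem~\ref{thm:minimizer} assembles known results, so the plan is to reduce each assertion to the cited literature rather than to argue from scratch. The assertions are existence of a minimizer, its quasi-monomiality, and its uniqueness up to scaling, all for pairs $x\in(X,\Delta)$ with $\Delta$ an $\R$-divisor.

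First I handle the case $\Delta=0$. Existence over $\C$ follows from Blum \cite{Blu18}, since $\C$ is uncountable. Blum's argument works as follows. Replace $\volhat$ by the normalized multiplicity $\mathrm{lct}(\mathfrak a_\bullet)^n\,\mathrm{e}(\mathfrak a_\bullet)$ on graded sequences of $\mathfrak m_x$-primary ideals, using Liu's comparison $\volhat(x,X)=\inf_{\mathfrak a}\mathrm{lct}(\mathfrak a)^n\mathrm{e}(\mathfrak a)$. Then take a minimizing sequence, pass to a generic limit of graded sequences, and take a valuation computing the log canonical threshold of that limit. Quasi-monomiality is \cite[Theorem~1.2]{Xu20}: a minimizer computes the lct of its valuation ideals and is an lc place of a bounded complement, hence quasi-monomial. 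Uniqueness up to scaling is \cite[Theorem~1.1]{XZ21}, obtained from the finite generation of the associated graded ring and the K-semistability of the degeneration.

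For general $\Delta$, I would check that each step uses only the lc/klt structure of the germ, and not $\Delta=0$. Liu's comparison, Blum's generic-limit argument, and the boundedness of complements all have log versions, with $A_X$ replaced by $A_{X,\Delta}$ and lct taken with respect to $(X,\Delta)$. Note that $\volhat_{X,\Delta}$ is still scale invariant, since $A_{X,\Delta}$ is homogeneous of degree one and $\vol$ of degree $-n$. I would then cite \cite[Theorems~3.3 and~3.4]{HLQ23}, where these pair versions are recorded, to conclude existence, quasi-monomiality and uniqueness.

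The main obstacle, were one to reprove rather than cite, is uniqueness: it requires the Xu--Zhuang finite generation together with a convexity argument along geodesics between two minimizers. I would not reprove this and would rely on the citations.
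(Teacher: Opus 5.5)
Your proposal is correct and follows essentially the same route as the paper, which also treats the theorem as a compilation of known results: existence from Blum \cite{Blu18} (or Xu \cite{Xu20} over an arbitrary algebraically closed field), quasi-monomiality from \cite[Theorem~1.2]{Xu20}, uniqueness up to scaling from \cite[Theorem~1.1]{XZ21}, and the versions for pairs from \cite[Theorems~3.3 and~3.4]{HLQ23}. One correction to your commentary, which does not affect your citation-based argument: the uniqueness proof in \cite{XZ21} does not use finite generation of $\operatorname{gr}_{v}\mathcal O_{X,x}$ (that result came later, in \cite{XZ25}); as far as I recall, it characterizes minimizers as K-semistable valuations through a stability threshold and shows such valuations are unique up to scaling.
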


We use the term ``adapted to a quasi-monomial valuation'' in the sense of \cite[Definition~2.11 and the paragraph preceding Theorem~2.17]{XZ26}.

\subsection{Koll\'ar components and the \texorpdfstring{$\delta$-invariant}{delta-invariant}}

\begin{definition}\label{def:kollar-component}
Let $x\in(X,\Delta)$ be a klt germ.  A \emph{Koll\'ar component} over $x$ is a prime divisor $S$ extracted by a projective birational morphism $\pi\colon(Y,S+\Delta_Y)\to(X,\Delta)$ such that $\operatorname{Exc}(\pi)=S$, $\pi(S)=x$, the divisor $S$ is $\Q$-Cartier, the pair $(Y,S+\Delta_Y)$ is plt, and $-S$ is $\pi$-ample, where $\Delta_Y$ is the strict transform of $\Delta$.  Then
\[
  K_Y+S+\Delta_Y
  =\pi^*(K_X+\Delta)+A_{X,\Delta}(S)S.
\]
By the adjunction formula, we may write
\[
  (K_Y+S+\Delta_Y)|_S=K_S+\Delta_S.
\]
Then $(S,\Delta_S)$ is a klt log Fano pair.
For $p\in S$, let $\ind_p(Y,S)$ denote the Cartier index of $S$ as a Weil divisor on $Y$ at $p$, and put $I(S):=\max_{p\in S}\ind_p(Y,S)$.
\end{definition}

The birational model $Y$ is uniquely determined over $X$ by the divisorial valuation $\ord_S$ \cite[Definition~1.1 and the paragraph following it]{LX20}, so $I(S)$ is well defined.  It is finite because a fixed positive multiple of $S$ is Cartier near the projective exceptional divisor.

\begin{definition}\label{def:delta}
Let $(V,B)$ be a $d$-dimensional log Fano pair and put $L:=-(K_V+B)$.  For a prime divisor $F$ over $V$, choose a model $\mu\colon V'\to V$ on which $F$ appears and set
\[
  S_L(F):=\frac{1}{L^d}\int_0^\infty
  \vol(\mu^*L-tF)\,dt.
\]
The $\delta$-invariant of $(V,B)$ is
\[
  \delta(V,B):=\inf_F\frac{A_{V,B}(F)}{S_L(F)},
\]
where $F$ runs over all prime divisors over $V$.
\end{definition}

For rational coefficients, the Fujita--Li valuative criterion
\cite[Theorem~6.5]{Fuj19} (cf.\ \cite[Theorem~3.7]{Li17}) shows that
$(V,B)$ is K-semistable if and only if $\delta(V,B)\ge1$.  For real
coefficients, we use the valuative definition of K-semistability in
\cite[Definition~2.5]{LZ24}, and the same equivalence follows from
\cite[Theorem~2.6]{LZ24}.

\medskip

The following local-to-global comparison was first proved by Liu for K-semistable $\Q$-Fano varieties \cite[Theorem~2]{Liu18}. For log Fano pairs with rational coefficients, the form involving the $\delta$-invariant follows from \cite[Theorem~D]{BJ20}; it is used in this form in \cite[Proof of Corollary~2.22]{LZ24}.

\begin{lemma}\label{lem:local-global}
Let $(V,B)$ be a $d$-dimensional log Fano pair, and put $L:=-(K_V+B)$.  Then every closed point $p\in V$ satisfies
\[
  \delta(V,B)^dL^d
  \le\left(\frac{d+1}{d}\right)^d\volhat(p,V,B).
\]
\end{lemma}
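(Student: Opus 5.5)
We prove the estimate valuation by valuation and then take the infimum. Write $\volhat(p,V,B)$ as an infimum of $\volhat_{V,B}(\ord_F)$ over prime divisors $F$ over $V\ni p$. Since the $\delta$-invariant is defined through prime divisors (Definition~\ref{def:delta}), it is enough to show
\[
  \delta(V,B)^dL^d\le\left(\frac{d+1}{d}\right)^dA_{V,B}(F)^d\vol_{V,p}(\ord_F)
\]
for every such $F$. The inequality in Lemma~\ref{lem:local-global} then follows.

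\emph{Reduction to divisorial valuations.} We need that the infimum defining $\volhat(p,V,B)$ may be taken over divisorial valuations. The minimizer in Theorem~\ref{thm:minimizer} is quasi-monomial. We approximate it by divisorial valuations $\tfrac1k\ord_{F_k}$ in a fixed simplicial cone $\operatorname{QM}(Y,D)$. On such a cone $A_{V,B}$ is linear and $\vol_{V,p}$ is continuous, so the normalized volumes of the approximants converge to the minimum. Alternatively, one can cite Li--Xu's result that the local volume is an infimum over Kollár components. I expect this reduction to be the only input beyond elementary estimates.

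\emph{Key volume estimate.} Fix $F$ with center $p$ and set $v=\ord_F$. For $m$ sufficiently divisible and $t\ge0$, a section of $\mathcal O(\lfloor mL\rfloor)$ vanishes to order $\ge mt$ along $F$ exactly when its image in $\mathcal O_{V,p}/\mathfrak a_{mt}(v)$ vanishes; here we trivialize $\lfloor mL\rfloor$ near $p$. Hence
\[
  h^0\bigl(\mu^*\lfloor mL\rfloor-mtF\bigr)\ge h^0(\lfloor mL\rfloor)-\ell\bigl(\mathcal O_{V,p}/\mathfrak a_{mt}(v)\bigr).
\]
Dividing by $m^d/d!$ and letting $m\to\infty$ gives
\[
  \vol(\mu^*L-tF)\ge L^d-t^d\vol_{V,p}(v).
\]
Real coefficients cause no problem, since only the leading asymptotics matter. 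Put $T:=(L^d/\vol_{V,p}(v))^{1/d}$ and integrate the bound over $[0,T]$, where it is nonnegative:
\[
  S_L(F)\ge\frac1{L^d}\int_0^T\bigl(L^d-t^d\vol_{V,p}(v)\bigr)\,dt=\frac{d}{d+1}T.
\]

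\emph{Conclusion.} By the definition of $\delta$,
\[
  \delta(V,B)\le\frac{A_{V,B}(F)}{S_L(F)}\le\frac{d+1}{d}A_{V,B}(F)\left(\frac{\vol_{V,p}(v)}{L^d}\right)^{1/d}.
\]
Raising to the $d$-th power and multiplying by $L^d$ gives $\delta(V,B)^dL^d\le\bigl(\frac{d+1}{d}\bigr)^d\volhat_{V,B}(\ord_F)$. Taking the infimum over $F$ and using the reduction step proves Lemma~\ref{lem:local-global}.
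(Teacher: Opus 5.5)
Your argument is correct, but it takes a different route from the paper. The paper proves the lemma by citation. For rational coefficients it quotes the log-pair form of Blum--Jonsson's Theorem~D. For real coefficients it approximates $B$ by rational boundaries $B(\mathbf b_j)$ and passes to the limit using three continuity inputs:
\begin{enumerate}
\item convergence $\delta(V,B(\mathbf b_j))\to\delta(V,B)$ (Liu--Zhou),
\item polynomiality of the anticanonical volume in the coefficients,
\item continuity of the local volume in the coefficients (Han--Liu--Qi, Lemma~5.3).
\end{enumerate}
You instead reprove the Liu/Blum--Jonsson estimate $S_L(F)\ge\frac{d}{d+1}\bigl(L^d/\vol_{V,p}(\ord_F)\bigr)^{1/d}$ directly, divisor by divisor. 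This handles real coefficients with no approximation in $B$, because $B$ enters only through $A_{V,B}(F)$ and the numerical class of $L$. So your route is self-contained and avoids the coefficient-continuity results for $\delta$ and $\volhat$. Its one nontrivial input is that $\volhat(p,V,B)$ is an infimum over divisorial valuations. You obtain this correctly from the quasi-monomial minimizer of Theorem~\ref{thm:minimizer}, together with linearity of $A_{V,B}$ and continuity of $\vol_{V,p}$ on a simplicial cone. Take the smallest face containing the minimizer in its relative interior, so that the divisorial approximants are still centered at $p$.

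One step should be written more carefully. If $B$ has irrational coefficients, $\lfloor mL\rfloor$ need not be Cartier at $p$ for any $m$, so ``trivialize $\lfloor mL\rfloor$ near $p$'' is not available as stated. The fix is short. Run your counting argument for an ample $\Q$-Cartier $L'$ with $L-L'$ ample, which gives $\vol(\mu^*L'-tF)\ge L'^d-t^d\vol_{V,p}(\ord_F)$. Then note $\vol(\mu^*L-tF)\ge\vol(\mu^*L'-tF)$, since $\mu^*(L-L')$ is numerically equivalent to an effective $\R$-divisor. Letting $L'\to L$ gives your bound $\vol(\mu^*L-tF)\ge L^d-t^d\vol_{V,p}(\ord_F)$. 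The integration over $[0,T]$ and the final comparison with $\delta(V,B)\le A_{V,B}(F)/S_L(F)$ are then exactly as you wrote them.
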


\begin{proof}
When $B$ has rational coefficients, the assertion is the log-pair form of \cite[Theorem~D]{BJ20}; see \cite[Proof of Corollary~2.22]{LZ24}.

For real coefficients, write $B=\sum_{i=1}^m b_iB_i$ and put
$\mathbf b:=(b_1,\ldots,b_m)$.  For
$\mathbf t=(t_1,\ldots,t_m)$, set
$B(\mathbf t):=\sum_{i=1}^m t_iB_i$.  By
\cite[Propositions~2.10 and~2.15]{LZ24}, there are rational vectors
$\mathbf b_j\to\mathbf b$ in a rational polytope such that every
$(V,B(\mathbf b_j))$ is log Fano and
$\delta(V,B(\mathbf b_j))\to\delta(V,B)$.  The anticanonical volume is polynomial in the coefficients on this polytope, and
$\volhat(p,V,B(\mathbf b_j))\to\volhat(p,V,B)$ by
\cite[Lemma~5.3]{HLQ23}.  Passing to the limit proves the assertion.
\end{proof}

\begin{lemma}\label{lem:kollar-volume}
Let $x\in(X,\Delta)$ be a klt germ and let $S$ be a Koll\'ar component over $x$.  Put $d:=\dim S$ and $L:=-(K_S+\Delta_S)$.  Then
\[
  \volhat_{X,\Delta}(\ord_S)=A_{X,\Delta}(S)L^d.
\]
\end{lemma}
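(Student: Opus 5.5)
The plan is to compute both factors of $\volhat_{X,\Delta}(\ord_S)=A_{X,\Delta}(S)^n\vol_{X,x}(\ord_S)$ on the plt blowup $\pi\colon Y\to X$ extracting $S$. Here $n=d+1$. Put $a:=A_{X,\Delta}(S)$. It then suffices to prove
\[
  \vol_{X,x}(\ord_S)=(-S|_S)^{d}\cdot\bigl(-(-S)^{\,n}\bigr)/(-S|_S)^d,
\]
that is, $\vol_{X,x}(\ord_S)=-(-S)^n=(-S|_S)^{d}$ computed on $S$, and to show that $L\equiv a\,(-S|_S)$. Granting these, $a^nL'^d$ with $L'=-S|_S$ equals $a\cdot(aL')^d=aL^d$, as required.

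\textbf{Step 1: identifying $L$.} Restrict the displayed formula $K_Y+S+\Delta_Y=\pi^*(K_X+\Delta)+aS$ to $S$. Since $\pi(S)=x$, the pullback $\pi^*(K_X+\Delta)$ restricts to a numerically trivial class on $S$ (we may work $\R$-linearly near the fibre). By adjunction we get $K_S+\Delta_S\equiv aS|_S$, so $L\equiv a(-S|_S)$. The class $-S|_S$ is $\Q$-Cartier and ample because $-S$ is $\pi$-ample.

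\textbf{Step 2: volume of $\ord_S$.} We have $\mathfrak a_m(\ord_S)=\pi_*\mathcal O_Y(-mS)$. Since $-S$ is $\pi$-ample, standard asymptotic Riemann--Roch on the exceptional fibre applies. Concretely, consider the exact sequences
\[
0\to\mathcal O_Y(-(k+1)S)\to\mathcal O_Y(-kS)\to\mathcal O_Y(-kS)|_S\to 0
\]
(up to the correction at non-Cartier points of $S$, which only affects lower-order terms). For $k\gg0$, relative Kawamata--Viehweg or Serre vanishing gives $R^1\pi_*\mathcal O_Y(-(k+1)S)=0$ after the first few steps. Hence
\[
\ell(\mathcal O_{X,x}/\mathfrak a_m)=\sum_{k<m}h^0(S,\mathcal O_Y(-kS)|_S)+O(m^{d}).
\]
Summing $h^0\sim k^d(-S|_S)^d/d!$ gives $\vol(\ord_S)=(-S|_S)^d$.

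\textbf{Main obstacle.} The main difficulty is handling $S$ being only $\Q$-Cartier, so that $\mathcal O_Y(-kS)|_S$ differs from $\mathcal O_S(-kS|_S)$ at points of index $>1$. To get around this, first pass to multiples $k\in I(S)\Z$, where $-kS$ is Cartier and the sums are clean. Then compare the other $k$ by monotonicity of $\mathfrak a_m$ in $m$; the discrepancy is $O(m^d)$, which is negligible in the limit. Alternatively, one can cite the known formula $\vol(\ord_S)=-(-S)^n$ for an exceptional divisor with $\pi$-ample $-S$ (e.g.\ as in Li--Liu--Xu) to shortcut Step 2.
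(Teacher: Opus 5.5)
Your proposal is correct and follows the paper's proof. The paper takes $\vol_{X,x}(\ord_S)=(-S|_S)^d$ directly from Li--Xu's Lemma~2.11, which is the citation shortcut you mention in place of your filtration-and-vanishing sketch in Step~2, and it gets $L=-A_{X,\Delta}(S)\,S|_S$ by restricting $K_Y+S+\Delta_Y=\pi^*(K_X+\Delta)+A_{X,\Delta}(S)S$ to $S$ via adjunction, as in your Step~1; your concluding computation $a^n(-S|_S)^d=a\,\bigl(a(-S|_S)\bigr)^d=aL^d$ is right, though your opening display is just a roundabout way of writing $-(-S)^n=(-S|_S)^d$.
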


\begin{proof}
Let $\pi\colon Y\to X$ be the plt blow-up extracting $S$, and let $\Delta_Y$ be the strict transform of $\Delta$.  By \cite[Lemma~2.11]{LX20}, $\vol_{X,x}(\ord_S)=(-S|_S)^d$.  Restricting the equality in Definition~\ref{def:kollar-component} to $S$
and using the adjunction formula gives
$L=-A_{X,\Delta}(S)S|_S$.
\end{proof}

\subsection{Finite morphisms and local bounds}

\begin{lemma}\label{lem:index-cover}
Let $p\in V$ be a normal germ and let $D_1,\ldots,D_s$ be $\Q$-Cartier Weil divisors on $V$.  There is a finite quasi-\'etale Galois morphism $g\colon(\widetilde p\in\widetilde V)\to(p\in V)$ with finite abelian Galois group such that $g^{-1}(p)=\{\widetilde p\}$ and each $g^*D_i$ is Cartier. If $s=1$ and $D_1$ has Cartier index $r$ at $p$, then $g$ is cyclic of degree $r$.

If $p\in(V,B)$ is klt and $\widetilde B$ is defined by
$K_{\widetilde V}+\widetilde B=g^*(K_V+B)$, then
$\widetilde p\in(\widetilde V,\widetilde B)$ is klt.
\end{lemma}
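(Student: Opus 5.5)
The plan is to build $g$ as a fiber product of index-one covers, one for each $D_i$.

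\emph{One divisor.} Shrink $V$ to an affine neighborhood of $p$ on which $rD$ is Cartier, $r$ is the Cartier index of $D$ at $p$, and $\mathcal O_V(rD)\cong\mathcal O_V$. Choose a trivializing section. This gives an isomorphism $\phi\colon\mathcal O_V(rD)\to\mathcal O_V$, and $\phi$ makes
\[
\mathcal A:=\bigoplus_{j=0}^{r-1}\mathcal O_V(jD)
\]
into an $\mathcal O_V$-algebra, where products are taken as reflexive products and reduced via $\phi$. Set $\widetilde V:=\Spec_V\mathcal A$. This is the standard index-one cover (see \cite[Definition~5.19]{KM98}).

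\emph{Properties of the cover.} We check the following, in order.
\begin{itemize}
\item The map $g\colon\widetilde V\to V$ is finite of degree $r$.
\item The group $\mu_r$ acts through the grading, so $g$ is Galois with cyclic group.
\item Over the locus where $D$ is Cartier, $g$ is étale. Since $V$ is normal, the non-Cartier locus has codimension $\ge2$, so $g$ is quasi-étale.
\item $\widetilde V$ is normal, because $\mathcal A$ is $S_2$ (a direct sum of reflexive sheaves) and $g$ is étale in codimension one.
\item $g^*D$ is Cartier, since it is locally generated by the degree-one piece.
\item $g^{-1}(p)$ is a single point. Because $r$ is exactly the local index, for each $0<j<r$ the sheaf $\mathcal O_V(jD)$ is not invertible at $p$. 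Hence $\mathcal O_V(jD)\cdot\mathcal O_V(-jD)\subseteq\mathfrak m_p$ after the identification, so every homogeneous element of positive degree $<r$ lies in the radical of $\mathfrak m_p\mathcal A$. Thus $\mathcal A/\mathfrak m_p\mathcal A$ is local.
\end{itemize}
When $s=1$ this already gives the second sentence of the statement.

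\emph{Several divisors.} Proceed by induction on $s$. Take the cover $g_1$ for $D_1$, then apply the construction to $g_1^*D_2$ at $\widetilde p_1$, and continue. At each step the pullbacks of the earlier $D_i$ remain Cartier, and a single point stays over $p$.

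A composite of abelian covers need not be Galois over $V$, so instead use the normalization of the fiber product $\widetilde V_1\times_V\cdots\times_V\widetilde V_s$ (the component through the points over $p$). Concretely this is $\Spec$ of the $\bigoplus_{\mathbf j}\mathcal O_V(\sum j_iD_i)$-type algebra graded by $\prod\Z/r_i$, modulo the subgroup $H$ of degrees whose divisor is already Cartier at $p$. The grading group $G=\prod\Z/r_i/H$ is abelian and acts with quotient $V$. The same three arguments apply to it: the codimension-one étaleness argument, the argument that the graded pieces of non-Cartier degree lie in the maximal ideal (giving one point over $p$), and the argument that each $g^*D_i$ is locally principal. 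This handles the general case, with abelian Galois group.

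\emph{Klt statement.} Since $g$ is quasi-étale, Riemann--Hurwitz gives $K_{\widetilde V}=g^*K_V$, so $\widetilde B=g^*B\ge0$. Then \cite[Proposition~5.20]{KM98} shows that $(\widetilde V,\widetilde B)$ is klt if and only if $(V,B)$ is. I expect the main obstacle to be the uniqueness of the point over $p$ in the multi-divisor case. The fix is to quotient the grading group by $H$, so that the only degrees whose divisors are Cartier at $p$ are trivial.
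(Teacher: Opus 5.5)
Your proposal is correct and follows essentially the paper's route. The paper cites Zhuang's construction \cite[Lemma-Definition~2.15]{Zhu24a} of the quasi-\'etale abelian cover attached to the finite subgroup of the local class group generated by the classes of the $D_i$; this subgroup is cyclic of order $r$ when $s=1$, and the cover is exactly the $G$-graded algebra you build by hand. The paper then applies \cite[Proposition~5.20]{KM98} for the klt assertion, as you do.

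One detail in your several-divisor step should be made explicit. When you pass from the $\prod\Z/r_i$-grading to the $G$-grading, choose the trivializations of the divisors $\sum j_iD_i$ with $(j_i)\in\ker\bigl(\Z^s\to\operatorname{Cl}(\mathcal O_{V,p})\bigr)$ multiplicatively, for example on a basis of this free abelian group. Otherwise the induced multiplication need not be associative.
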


\begin{proof}
The assertions about the cover are \cite[Section~2.5, Lemma-Definition~2.15]{Zhu24a}. In the case of one divisor, the subgroup generated by the class of $D_1$ in the local class group is cyclic of order equal to its Cartier index. The final assertion follows from \cite[Proposition~5.20]{KM98}.
\end{proof}

The finite degree formula for finite crepant Galois morphisms was proved by Xu--Zhuang \cite[Theorem~1.3]{XZ21}.  Although it was originally stated for $\Q$-divisors, the same proof applies to the pairs considered here; see the discussion preceding \cite[Theorem~2.15]{HLQ23}.

\begin{lemma}[Finite degree formula]\label{lem:finite-degree}
Let $y\in(Y,\Delta_Y)$ and $x\in(X,\Delta)$ be $n$-dimensional klt germs.  Let $f\colon\bigl(y\in(Y,\Delta_Y)\bigr)\to\bigl(x\in(X,\Delta)\bigr)$ be a finite Galois morphism such that $f^{-1}(x)=\{y\}$ and
$K_Y+\Delta_Y=f^*(K_X+\Delta)$. Then
\[
  \volhat(y,Y,\Delta_Y)=\deg f\volhat(x,X,\Delta).
\]
\end{lemma}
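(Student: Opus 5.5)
We would not redo the argument of \cite[Theorem~1.3]{XZ21}. Instead we reduce the case of real coefficients to the $\Q$-divisor case by perturbing the coefficients rationally, and then pass to the limit using continuity of local volumes in the coefficients, in the same spirit as the proof of Lemma~\ref{lem:local-global}.

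\emph{Setting up the perturbation.} Write $\Delta=\sum_{i=1}^m b_i\Delta_i$ with distinct prime divisors $\Delta_i$, and let $R$ be the ramification divisor of $f$, so that $K_Y=f^*K_X+R$. The crepant condition then reads $\Delta_Y=f^*\Delta-R$. For $\mathbf t\in\R^m$ put
\[
  \Delta(\mathbf t):=\sum_{i=1}^m t_i\Delta_i,
  \qquad
  \Delta_Y(\mathbf t):=f^*\Delta(\mathbf t)-R.
\]
By construction $K_Y+\Delta_Y(\mathbf t)=f^*(K_X+\Delta(\mathbf t))$ whenever the right-hand side makes sense.

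\emph{Rational approximation preserving all hypotheses.} The set of $\mathbf t$ for which $K_X+\Delta(\mathbf t)$ is $\R$-Cartier near $x$ is an affine subspace defined over $\Q$, and it contains $\mathbf b$. Hence there are rational vectors $\mathbf b_j\to\mathbf b$ lying in it, and for these $K_X+\Delta(\mathbf b_j)$ is $\Q$-Cartier. The klt condition is open in the coefficients on this subspace: a fixed log resolution computes all discrepancies as affine functions of $\mathbf t$. Effectivity near $x$ also persists, since each coefficient of $\Delta(\mathbf t)$ is $t_i$ and each coefficient of $\Delta_Y(\mathbf t)$ is affine in $\mathbf t$; any coefficient equal to $0$ at $\mathbf b$ stays $0$, so we should restrict to the face of the relevant polytope containing $\mathbf b$ in its relative interior. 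Thus, for $j\gg0$, both $x\in(X,\Delta(\mathbf b_j))$ and $y\in(Y,\Delta_Y(\mathbf b_j))$ are klt $\Q$-germs related by the same finite Galois crepant morphism $f$, with $f^{-1}(x)=\{y\}$. Applying \cite[Theorem~1.3]{XZ21} gives
\[
  \volhat(y,Y,\Delta_Y(\mathbf b_j))=\deg f\,\volhat(x,X,\Delta(\mathbf b_j)).
\]

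\emph{Passing to the limit.} By \cite[Lemma~5.3]{HLQ23}, the local volume is continuous in the coefficients on both $X$ and $Y$. Letting $j\to\infty$ yields the formula for $\mathbf b$.

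The main obstacle is this last continuity step. Continuity in \cite{HLQ23} is stated for coefficients varying in a fixed finite-dimensional family with klt limit, and on $Y$ the boundary $\Delta_Y(\mathbf t)$ varies affinely rather than linearly in $\mathbf t$ because of the fixed ramification term $-R$. We would check that the cited continuity argument only uses that $A_{Y,\Delta_Y(\mathbf t)}(v)$ depends affinely on $\mathbf t$, uniformly on valuations with bounded normalized volume, which still holds here.

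If that fails, the fallback is to follow \cite{XZ21} directly:
\begin{enumerate}
\item By Theorem~\ref{thm:minimizer} and uniqueness, the minimizer on $Y$ is Galois-invariant, so it is the pullback of a valuation $w$ on $X$.
\item By the crepant relation, $A_{Y,\Delta_Y}=A_{X,\Delta}\circ f_*$.
\item For Galois-invariant valuations, $\vol=\deg f\cdot\vol$ of the pushforward.
\end{enumerate}
This argument never uses rationality of the coefficients.
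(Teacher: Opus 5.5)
Your argument is correct, but it takes a different route from the paper. The paper gives no separate argument. It cites \cite[Theorem~1.3]{XZ21} and asserts that Xu--Zhuang's proof extends verbatim to $\R$-boundaries, referring to \cite{HLQ23}, where existence and uniqueness of minimizers with real coefficients are available. You instead treat the $\Q$-statement as a black box and reach real coefficients by rational perturbation plus continuity of $\volhat$ in the coefficients. This is the same device the paper uses in the proof of Lemma~\ref{lem:local-global}. What your route buys is that nobody has to re-inspect the proof in \cite{XZ21}. What it costs is the bookkeeping you supply:
\begin{itemize}
\item the $\R$-Cartier locus is a rational affine subspace;
\item klt is open on it;
\item effectivity of $\Delta_Y(\mathbf t)$ is preserved once you freeze the $t_i$ with $e b_i-(e-1)=0$. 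This is harmless because such $b_i=1-1/e$ are automatically rational, so the face you restrict to is still rational and its rational points accumulate at $\mathbf b$.
\end{itemize}
The step you flag as the main obstacle is not one. First, $\Delta_Y(\mathbf t)-\Delta_Y=f^*(\Delta(\mathbf t)-\Delta)$ is linear in $\mathbf t-\mathbf b$. Second, the continuity statement \cite[Lemma~5.3]{HLQ23} only needs the coefficient vector of $\Delta_Y(\mathbf b_j)$, along the fixed prime components on $Y$, to converge to that of $\Delta_Y$ inside the $\R$-Cartier locus, and it does. Your fallback sketch, as written, only proves $\volhat(y,Y,\Delta_Y)\ge\deg f\,\volhat(x,X,\Delta)$. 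The reverse inequality needs a separate observation: any extension $v$ of the minimizer $w$ on $X$ satisfies $A_{Y,\Delta_Y}(v)=A_{X,\Delta}(w)$ and $\vol(v)\le\deg f\,\vol(w)$, because $\mathfrak a_m(w)\mathcal O_Y\subseteq\mathfrak a_m(v)$ and multiplicities of $\mathfrak m_x$-primary ideals multiply by $\deg f$ when $f^{-1}(x)=\{y\}$.
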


We will use the following local upper bound \cite[Theorem~A.4]{LX19}.

\begin{lemma}\label{lem:local-upper}
Let $p\in(V,B)$ be a $d$-dimensional klt germ.  Then
\[
  \volhat(p,V,B)\le d^d,
\]
with equality if and only if $p\in V$ is smooth and $p\notin\Supp B$.
\end{lemma}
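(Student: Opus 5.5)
Since the statement is quoted from \cite[Theorem~A.4]{LX19}, the plan is to reconstruct its proof in two parts: first reduce to $B=0$, then prove the inequality and the equality case for the variety $V$ alone. I have not checked every step; the rigidity step at the end is the part I am least sure how to carry out.

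\textbf{Reduction to $B=0$.} For every $v\in\Val_{V,p}$ we have $A_{V,B}(v)\le A_V(v)$, and the inequality is strict whenever $p\in\Supp B$, since then $v(B)>0$. Taking the infimum of $\volhat_{V,B}(v)=A_{V,B}(v)^d\vol_{V,p}(v)$ over $v$ gives $\volhat(p,V,B)\le\volhat(p,V)$. For the strict version I would use Theorem~\ref{thm:minimizer}: let $v_0$ be a minimizer of $\volhat_{V,B}$. If $p\in\Supp B$, then
\[
\volhat(p,V,B)=\volhat_{V,B}(v_0)<\volhat_V(v_0)\le\volhat(p,V),
\]
where the middle inequality is strict because $v_0(B)>0$. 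So it suffices to show $\volhat(p,V)\le d^d$, with equality only when $p\in V$ is smooth.

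\textbf{The inequality for $V$.} The standard computation at a smooth point is $\volhat(q,V)=d^d$, realized by $\ord_q$: here $A=d$ and $\vol=1$. I would then invoke lower semicontinuity of $q\mapsto\volhat(q,V)$ on closed points (Blum--Liu), which means $\volhat(p,V)\le\liminf_{q\to p}\volhat(q,V)$. Smooth points are dense, so this gives $\volhat(p,V)\le d^d$.

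\textbf{Equality case.} The converse direction is the computation at a smooth point above. For the forward direction, suppose $\volhat(p,V)=d^d$ and let $v_0$ be the quasi-monomial minimizer.
\begin{enumerate}
\item By \cite[Theorem~1.2]{XZ25}, $v_0$ induces a degeneration of $p\in V$ to a K-semistable log Fano cone $o\in C$ with $\volhat(o,C)=d^d$.
\item I would then show rigidity: a K-semistable Fano cone of volume $d^d$ must be $\mathbb A^d$ with its standard Reeb field. I would first try a Fujita/Liu-type volume bound for the cone. For quasi-regular cones this is $\vol(-K_S)\le (d)^{d}$-type bounds on the $(d-1)$-dimensional K-semistable base $S$, with equality only for $\mathbb P^{d-1}$ and the standard cone. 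Irregular Reeb fields would be handled by approximating them with quasi-regular ones.
\item Once $C\cong\mathbb A^d$, the graded ring $\mathrm{gr}_{v_0}\mathcal O_{V,p}$ is a polynomial ring generated in the appropriate degrees. Smoothness of the central fibre of the degeneration then gives smoothness of $p\in V$, since the general fibre of a flat family with smooth central fibre is smooth.
\end{enumerate}

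\textbf{Main obstacle.} The rigidity in step~2 is the hardest part, especially in the irregular case. As a fallback I would try a multiplicity argument: combine $\volhat(p,V)\le\operatorname{mult}_p(V)^{-1}\cdot d^d$-type estimates with the bound $\lct(\mathfrak m_p)\le d$. Equality in these should force $\operatorname{mult}_pV=1$, hence smoothness.
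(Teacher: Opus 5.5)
The paper gives no proof of this lemma; it is quoted from \cite[Theorem~A.4]{LX19}. So your outline has to stand on its own. The inequality can be made to work, but the forward implication of the equality case is not established.

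First, the reduction to $B=0$ is not available under the stated hypotheses. For a klt germ $p\in(V,B)$ only $K_V+B$ is $\R$-Cartier. Hence $K_V$ need not be $\Q$-Cartier, and neither $\volhat(p,V)$ nor a minimizer of $\volhat_V$ need exist. The paper uses the lemma in exactly that generality, for instance for $(\widetilde S,\widetilde\Delta_S)$ in the proof of Proposition~\ref{prop:component}. For the inequality the repair is easy: apply Blum--Liu lower semicontinuity to $q\mapsto\volhat(q,V,B)$ itself. Then $\{q:\volhat(q,V,B)\le d^d\}$ is closed and contains the dense set of smooth points off $\Supp B$. For equality, however, you must then work with the pair on the cone as well, i.e.\ also show that the degenerated boundary $B_0$ vanishes.

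The genuine gap is Step~2, which carries the whole equality case, and none of your suggested routes proves it. For a quasi-regular K-semistable cone with Koll\'ar component $S$, Lemma~\ref{lem:kollar-volume} gives $\volhat(o,C)=A_C(S)\bigl(-(K_S+\Delta_S)\bigr)^{d-1}$. A Fujita-type bound on the base controls only the second factor. You would also need $A_C(S)\le d$, together with an analysis of equality. This does not follow from K-semistability of $(S,\Delta_S)$, since $-S|_S$ is merely a $\Q$-Cartier Weil divisor and $\Delta_S$ contains the orbifold different.

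For an irregular Reeb vector, the approximating Koll\'ar components satisfy $\volhat(\ord_{S_i})>\volhat(o,C)$ by uniqueness of the minimizer. Passing to the limit therefore only reproduces $\volhat(o,C)\le d^d$ and gives no rigidity.

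Your fallback rests on a false estimate. The bound $\volhat(p,V)\le d^d/\operatorname{mult}_pV$ fails for the three-dimensional ordinary double point, where $\volhat=16>27/2$. The correct estimate $\volhat(p,V)\le\operatorname{lct}(\mathfrak m_p)^d\operatorname{mult}_pV$ bounds $\volhat$ by a quantity that grows with the multiplicity, so it cannot force $\operatorname{mult}_pV=1$.

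Steps~1 and~3 are correct, but they only move the problem to the cone. Without a proof that a K-semistable log Fano cone with vertex volume $d^d$ is $(\mathbb A^d,0)$, the implication ``$\volhat(p,V,B)=d^d\Rightarrow p\in V$ smooth and $p\notin\Supp B$'' remains unproved.
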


\section{Koll\'ar components and discrepancies}\label{sec:engines}

\subsection{Approximation by Koll\'ar components}

Xu--Zhuang proved the following result for rational boundary coefficients
\cite[Lemmas~2.18 and~2.19]{XZ26}.  We verify below that their argument also
applies to real boundaries.

\begin{lemma}\label{lem:real-cone-nearby}
Let $x\in(X,\Delta)$ be a K-semistable log Fano cone with Reeb vector $\xi$ and a torus $T$ acting on it.  For every $\varepsilon>0$, there is a neighborhood $U$ of $\xi$ in the Reeb cone such that, for every quasi-regular Reeb vector $\eta\in U$, the Koll\'ar component $S_\eta$ corresponding to $\eta$ satisfies
\[
  \delta(S_\eta,\Delta_\eta)\ge1-\varepsilon.
\]
If $\pi_\eta\colon(Y_\eta,S_\eta+\Delta_{Y_\eta})\to(X,\Delta)$ is the corresponding plt blow-up, then $\Delta_\eta$ is determined by
$(K_{Y_\eta}+S_\eta+\Delta_{Y_\eta})|_{S_\eta}=K_{S_\eta}+\Delta_\eta$.
\end{lemma}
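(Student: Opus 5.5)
The plan is to follow Xu--Zhuang's proof of \cite[Lemmas~2.18 and~2.19]{XZ26} step by step, and to isolate the two places where rationality of the coefficients of $\Delta$ enters. Those are: the valuative criterion for K-semistability of the log Fano cone, and the comparison between $\delta(S_\eta,\Delta_\eta)$ and the cone invariants. At each place we replace the rational-coefficient statement by its real-coefficient analogue from \cite{LZ24}.

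First we set up the cone side. The K-semistable log Fano cone $x\in(X,\Delta)$ with Reeb vector $\xi$ means that $\ord_\xi$ minimizes $\volhat_{X,\Delta}$ on $\Val_{X,x}$; by Theorem~\ref{thm:minimizer} and \cite{HLQ23} this makes sense for real $\Delta$. Let $\eta$ be a quasi-regular Reeb vector. The quotient by the $\mathbb G_m$ generated by $\eta$ gives the Koll\'ar component $S_\eta$ together with the pair $(S_\eta,\Delta_\eta)$. Here $\Delta_\eta$ is the adjunction boundary, which also contains the orbifold divisor coming from the nontrivial stabilizers. Xu--Zhuang express $\delta(S_\eta,\Delta_\eta)$ through $T$-invariant valuations on the cone: for a divisor $F$ over $S_\eta$, the ratio $A_{S_\eta,\Delta_\eta}(F)/S(F)$ is controlled by the derivative of $\volhat_{X,\Delta}$ at $\ord_\eta$ in the direction of the quasi-monomial valuations obtained by combining $\ord_\eta$ with the pullback of $F$. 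We will check that each identity used there is linear in $\Delta$. These are the adjunction formula, the formula $A_{X,\Delta}(\ord_\eta)=$ the value of the linear log discrepancy function on the Reeb cone, and Lemma~\ref{lem:kollar-volume}. Linearity means they hold verbatim for real coefficients.

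Second, we run the continuity argument. On the Reeb cone, the function $\zeta\mapsto\volhat_{X,\Delta}(\ord_\zeta)$ is smooth, because $A$ is linear and $\vol$ is a rational-type function. Its minimum over the cone is attained at $\xi$. K-semistability gives more than this: it gives nonnegativity of the Futaki-type derivatives of $\volhat$ at $\xi$ along all special test configurations. Following \cite[Lemma~2.19]{XZ26}, the quantity $1-\delta(S_\eta,\Delta_\eta)$ is bounded by an expression of the form $C\,|\eta-\xi|$, uniformly over all $T$-equivariant degenerations. The constant comes from compactness of a neighborhood of $\xi$ in the Reeb cone together with the uniform finite-generation/boundedness of the relevant filtrations. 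Choosing $U$ small then gives $\delta\ge1-\varepsilon$.

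The main obstacle is this uniformity for real coefficients. Xu--Zhuang's boundedness of the relevant degenerations may use $\Q$-Cartier or finite-generation arguments that are stated for $\Q$-boundaries. If a direct check fails, I would try a perturbation argument. Write $\Delta=\sum b_i\Delta_i$ with $T$-invariant components $\Delta_i$. Approximate $\mathbf b$ by rational $\mathbf b_j$ in a rational polytope on which $(X,\Delta(\mathbf b_j))$ stays a log Fano cone for $T$; this is possible because klt is an open condition and the $\Delta_i$ are $T$-invariant. The minimizer $\xi_j$ of the perturbed cone lies near $\xi$, though it is only nearly K-semistable. We then apply the rational result with a slightly smaller $\varepsilon$, using the fact that $\delta(S_\eta,\cdot)$ depends continuously on the coefficients, as in \cite[Propositions~2.10 and~2.15]{LZ24} and in the proof of Lemma~\ref{lem:local-global}. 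The delicate point is making the neighborhood $U$ independent of $j$. For that I would combine the continuity estimate above with the observation that K-semistability of $(X,\Delta)$ forces the perturbed cones to be $\varepsilon$-close to K-semistable, by \cite[Theorem~2.6]{LZ24}.
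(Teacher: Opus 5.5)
There is a genuine gap. Your plan has the right shape (run Xu--Zhuang's argument and check real coefficients), but it does not contain the two steps that actually prove the lemma, and the mechanism you put in their place would not work.

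\textbf{The missing bridge from the cone to $\delta(S_\eta,\Delta_\eta)$.} This bridge is not a derivative of $\volhat_{X,\Delta}$ at $\ord_\eta$. Li's derivative formula there computes $\beta_{S_\eta,\Delta_\eta}(F)=A(F)-S(F)$ up to a positive factor. But $\ord_\eta$ is not a minimizer when $\eta\ne\xi$, so nothing controls the sign of $\beta$, let alone gives $A\ge(1-\varepsilon)S$ uniformly in $F$. What you need is the following.
\begin{itemize}
\item The cone $S$-invariant $S(\eta;u)$.
\item For $T$-invariant quasi-monomial $w$ on $S_\eta$ and the associated valuations $w_t$ on the cone, the identities $A_{X,\Delta}(w_t)=A_{S_\eta,\Delta_\eta}(w)+tA_{X,\Delta}(\eta)$ and $S(\eta;w_t)=S_{-(K_{S_\eta}+\Delta_\eta)}(w)+tA_{X,\Delta}(\eta)$.
\end{itemize}
Letting $t\to0^+$ turns ``$A_{X,\Delta}(u)\ge(1-\varepsilon)S(\eta;u)$ for all $T$-invariant $u$'' into ``$A_{S_\eta,\Delta_\eta}(w)\ge(1-\varepsilon)S_{-(K_{S_\eta}+\Delta_\eta)}(w)$ for all $T$-invariant $w$''. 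To get from there to $\delta(S_\eta,\Delta_\eta)\ge1-\varepsilon$, you must know that when $\delta(S_\eta,\Delta_\eta)<1$ it is computed by a $T$-invariant valuation; for real coefficients this is \cite[Theorem~4.7]{LZ24}. Your proposal never explains why $T$-invariant test objects suffice on $S_\eta$.

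\textbf{The source of uniformity in $\eta$.} You assert $1-\delta(S_\eta,\Delta_\eta)\le C|\eta-\xi|$, citing smoothness of $\zeta\mapsto\volhat(\ord_\zeta)$, compactness, and ``boundedness of the relevant filtrations''. That bound is essentially the whole content of the lemma, and you do not derive it. Smoothness on the finite-dimensional Reeb cone says nothing about the infinitely many directions $w_t$ leaving it, and those are where $\delta(S_\eta,\Delta_\eta)$ lives. No boundedness or finite-generation input is needed. The actual chain is:
\begin{itemize}
\item K-semistability of the cone gives $A_{X,\Delta}(u)\ge S(\xi;u)$ for every $T$-invariant quasi-monomial $u$.
\item Homogeneity and continuity of $A_{X,\Delta}$ and $\vol$ on the Reeb cone give $S(\xi;u)\ge(1-\varepsilon)S(\eta;u)$ for all such $u$, once $\eta$ is close to $\xi$.
\item The criterion above converts the resulting $A_{X,\Delta}(u)\ge(1-\varepsilon)S(\eta;u)$ into the conclusion.
\end{itemize}
Each step has a real-coefficient version: the first via \cite{Zhu24b}, the last via \cite[Theorem~4.7]{LZ24}, and the middle one does not depend on coefficients.

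\textbf{The fallback perturbation.} This does not rescue the argument. \cite[Lemma~2.19]{XZ26} requires a K-semistable cone, and the perturbed cones $(X,\Delta(\mathbf b_j),\xi)$ are generally not K-semistable. Their minimizers need not be of the form $\ord_{\xi_j}$ with $\xi_j$ in the Reeb cone. Also, \cite[Theorem~2.6]{LZ24} is about log Fano pairs, not about near-K-semistability of cones. Making the perturbation work would mean reopening the rational proof under a weakened hypothesis $A\ge(1-\varepsilon')S(\xi;\cdot)$. At that point the direct real-coefficient argument is simpler.
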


\begin{proof}
It is enough to consider $0<\varepsilon<1$.  We follow the proofs of
\cite[Lemmas~2.18 and~2.19]{XZ26}.  We use the cone $S$-invariant
$S(\eta;u)$ with the normalization of \cite[Paragraph~2.15]{XZ26}.

Let $\eta$ be quasi-regular.  For a $T$-invariant quasi-monomial valuation
$w$ on $S_\eta$ and $t>0$, let $w_t$ be the associated valuation on the
cone, as in \cite[(2.8)]{XZ26}.  The calculations in
\cite[(2.9) and~(2.10)]{XZ26} give
\begin{align*}
  A_{X,\Delta}(w_t)
  &=A_{S_\eta,\Delta_\eta}(w)+tA_{X,\Delta}(\eta),\\
  S(\eta;w_t)
  &=S_{-(K_{S_\eta}+\Delta_\eta)}(w)
    +tA_{X,\Delta}(\eta).
\end{align*}
These identities remain valid for real coefficients.  The first follows
from the adjunction formula in the statement and the linearity of log discrepancy on a quasi-monomial
cone, while the second is an identity between the corresponding volume
integrals.

We claim that
\begin{equation}\label{eq:real-cone-delta-criterion}
  \delta(S_\eta,\Delta_\eta)\ge1-\varepsilon
\end{equation}
if and only if
\begin{equation}\label{eq:real-cone-valuative-criterion}
  A_{X,\Delta}(u)\ge(1-\varepsilon)S(\eta;u)
\end{equation}
for every $T$-invariant quasi-monomial valuation $u$ centered at the
vertex.  Suppose first that \eqref{eq:real-cone-delta-criterion} holds.
By the valuative formula for the $\delta$-invariant of a log Fano pair with
real coefficients \cite[Section~2.2]{LZ24},
\[
  A_{S_\eta,\Delta_\eta}(w)
  \ge(1-\varepsilon)S_{-(K_{S_\eta}+\Delta_\eta)}(w)
\]
for every quasi-monomial valuation $w$ on $S_\eta$.  The two identities
above therefore imply \eqref{eq:real-cone-valuative-criterion} for every
valuation of the form $w_t$.  Every $T$-invariant quasi-monomial valuation
on the cone, other than a positive multiple of the Reeb valuation $\eta$,
is of this form \cite[Proof of Lemma~2.18]{XZ26}; for positive multiples
of $\eta$, the required inequality follows from
$A_{X,\Delta}(\eta)=S(\eta;\eta)$ and homogeneity.

Conversely, suppose that \eqref{eq:real-cone-valuative-criterion} holds.
Applying it to $w_t$ and letting $t\to0^+$ in the two identities above, we obtain
\[
  A_{S_\eta,\Delta_\eta}(w)
  \ge(1-\varepsilon)S_{-(K_{S_\eta}+\Delta_\eta)}(w)
\]
for every $T$-invariant quasi-monomial valuation $w$ on $S_\eta$.  If
$\delta(S_\eta,\Delta_\eta)<1-\varepsilon$, then, since
$1-\varepsilon<1$, \cite[Theorem~4.7]{LZ24} gives a $T$-invariant special
divisorial valuation computing $\delta(S_\eta,\Delta_\eta)$, contradicting
the preceding inequality.  This proves the claim.

Since the cone is K-semistable, the proof of
\cite[Theorem~3.10]{XZ21}, equivalently \cite[Theorem~2.16]{XZ26}, gives
\begin{equation}\label{eq:real-cone-K-semistable}
  A_{X,\Delta}(u)\ge S(\xi;u)
\end{equation}
for every $T$-invariant quasi-monomial valuation $u$.  The argument applies to real coefficients; see the real-coefficient discussion in
\cite[Section~2.6 and Proof of Theorem~2.19]{Zhu24b}.

Finally, the proof of \cite[Lemma~2.19]{XZ26} shows, after shrinking a
neighborhood $U$ of $\xi$, that
\[
  S(\xi;u)\ge(1-\varepsilon)S(\eta;u)
\]
for every $\eta\in U$ and every $T$-invariant quasi-monomial valuation
$u$.  This part of the argument uses only homogeneity and the continuity
of log discrepancy and valuation volume on the Reeb cone
\cite[Theorem~2.15(3) and Proposition~2.39]{LX18}, and is unchanged for
real coefficients.  Combining this estimate with
\eqref{eq:real-cone-K-semistable} and the claim proves the lemma.
\end{proof}

Li--Xu proved that a normalized volume minimizer is a limit of rescaled
valuations of Koll\'ar components \cite[Theorem~1.3]{LX20}.  Xu--Zhuang
strengthened this for rational boundary coefficients by controlling all nearby
divisorial valuations and the $\delta$-invariants of the corresponding
components \cite[Theorem~2.17]{XZ26}.  The same argument applies to real
coefficients; we include the details below.

\begin{proposition}\label{prop:kollar-approximation}
Let $x\in(X,\Delta)$ be a klt germ.  Let $v_*$ be a minimizer of $\volhat_{X,\Delta}$, and let
$\mu\colon(W,D)\to X$ be a log smooth model adapted to $v_*$.  For every $\varepsilon>0$, there is a neighborhood $U$ of $v_*$ in $\operatorname{QM}(W,D)$ such that every divisorial valuation in $U$ is proportional to the valuation of a Koll\'ar component $S$ satisfying
\[
  \delta(S,\Delta_S)\ge1-\varepsilon.
\]
\end{proposition}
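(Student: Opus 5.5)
We follow the strategy of \cite[Theorem~2.17]{XZ26}: degenerate to the associated log Fano cone, apply Lemma~\ref{lem:real-cone-nearby} there, and transport the conclusion back to $x\in(X,\Delta)$.

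\emph{Step 1 (degeneration).} By Theorem~\ref{thm:minimizer} and the stable degeneration theory \cite[Theorem~1.2]{XZ25} (together with its real-coefficient version, cf.\ \cite{HLQ23,Zhu24b}), the associated graded ring $\operatorname{gr}_{v_*}\mathcal O_{X,x}$ is finitely generated. It defines a K-semistable log Fano cone $x_0\in(X_0,\Delta_0)$ with torus $T$ and Reeb vector $\xi$ induced by $v_*$. Since $\mu\colon(W,D)\to X$ is adapted to $v_*$, the simplicial cone $\sigma$ of $\operatorname{QM}(W,D)$ containing $v_*$ in its relative interior identifies, near $v_*$, with a neighborhood of $\xi$ in the Reeb cone of $X_0$. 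Under this identification $w\in\sigma$ corresponds to a Reeb vector $\eta_w$, and $\operatorname{gr}_w\mathcal O_{X,x}\cong\operatorname{gr}_{v_*}\mathcal O_{X,x}$ as rings, with the grading regraded by $\eta_w$. This is where the adapted hypothesis enters, as in \cite[Definition~2.11]{XZ26}.

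\emph{Step 2 (cone side).} Apply Lemma~\ref{lem:real-cone-nearby} to $(X_0,\Delta_0,\xi)$ with the given $\varepsilon$. This produces a neighborhood $U_0$ of $\xi$ such that every quasi-regular $\eta\in U_0$ yields a Koll\'ar component $S_\eta$ over $x_0$ with $\delta(S_\eta,\Delta_\eta)\ge1-\varepsilon$. Take $U$ to be the preimage of $U_0$ in $\sigma$, shrunk if necessary so that it lies in the region where the Step~1 identification is valid.

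\emph{Step 3 (lifting).} Let $w\in U$ be divisorial, so that $\eta_w$ is quasi-regular. Since $\operatorname{gr}_w\mathcal O_{X,x}$ is finitely generated and isomorphic to the coordinate ring of $X_0$, the standard argument \cite[Lemma~2.10]{LX20}, \cite{XZ26} shows the following. The Rees algebra of $w$ is finitely generated, and its central fiber gives the degeneration of $(X,\Delta)$ to $(X_0,\Delta_0)$. Its Proj over $X$ extracts a divisor $S$ with $w=c\ord_S$, and $S$ is isomorphic to the quotient $(X_0\setminus\{x_0\})/\langle\eta_w\rangle=S_{\eta_w}$, with matching different, so $(S,\Delta_S)\cong(S_{\eta_w},\Delta_{\eta_w})$. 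To check that $S$ is a Koll\'ar component, use plt inversion of adjunction: $(S,\Delta_S)$ is klt log Fano because $(X_0,\Delta_0)$ is a klt cone, so $(Y,S+\Delta_Y)$ is plt, and $-S$ is ample by construction of the Proj. The bound $\delta(S,\Delta_S)\ge1-\varepsilon$ then follows from Step~2.

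\emph{Main obstacle.} The hardest point is Step~1 in the real-coefficient setting. One must check that finite generation and K-semistability of the degeneration, and the identification of $\sigma$ near $v_*$ with a Reeb-cone neighborhood, hold for $\R$-boundaries. My first attempt would be to note that these statements concern only the valuation $v_*$ and the ring $\mathcal O_{X,x}$, with $\Delta$ entering only through log discrepancies, which are linear on $\sigma$. I would also cite the real-coefficient discussions in \cite{HLQ23,Zhu24b}, as in the proof of Lemma~\ref{lem:real-cone-nearby}.
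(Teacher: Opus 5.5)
Your proposal follows essentially the same route as the paper. Both use the stable degeneration of $v_*$ to a K-semistable log Fano cone, with \cite{Zhu24b} for real coefficients. Both identify divisorial $w$ near $v_*$ with quasi-regular Reeb vectors $\xi_w$ via $\operatorname{gr}_w\mathcal O_{X,x}\cong\operatorname{gr}_{v_*}\mathcal O_{X,x}$, apply Lemma~\ref{lem:real-cone-nearby} on the cone, and lift to Koll\'ar components $S_w$ with $(S_w,\Delta_{S_w})\cong(S_{\xi_w},\Delta_{\xi_w})$.

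The only real difference is that you sketch the lifting step (finitely generated Rees algebra, Proj, inversion of adjunction), whereas the paper cites the proof of \cite[Proposition~2.59]{LX18} and \cite[Lemma~2.10]{XZ26}. Your ``matching different'' assertion is exactly what those references supply, so you should cite them rather than treat it as evident. Also, your remark that $\Delta$ enters only through log discrepancies is not accurate: the minimizer, the degeneration $\Delta_0$, and its K-semistability all depend on $\Delta$ directly, so the real-coefficient citation is genuinely needed.
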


\begin{proof}
We follow the proof of \cite[Theorem~2.17]{XZ26}. By the stable degeneration theorem \cite[Theorem~1.2]{XZ25} and its
extension to real boundaries \cite[Theorem~2.19]{Zhu24b}, the minimizer
$v_*$ induces a K-semistable
log Fano cone $x_0\in(X_0,\Delta_0)$ with Reeb vector $\xi_*$, where
$X_0:=\Spec\operatorname{gr}_{v_*}\mathcal O_{X,x}$ and $\Delta_0$ is the induced degeneration of $\Delta$.  The associated graded ring is finitely generated by the same theorem.  Write $\Delta=\sum_{j=1}^m a_j\Delta_j$ and let $(\Delta_j)_0$ be the induced degeneration of $\Delta_j$.  Then $\Delta_0=\sum_{j=1}^m a_j(\Delta_j)_0$.

By \cite[Lemma~2.10]{LX18}, after shrinking a neighborhood $U$ of $v_*$ in
$\operatorname{QM}(W,D)$, we have
\[
  \operatorname{gr}_w\mathcal O_{X,x}
  \cong\operatorname{gr}_{v_*}\mathcal O_{X,x}
\]
for every $w\in U$.  Under this identification, $w$ determines a Reeb
vector $\xi_w$ near $\xi_*$, and $w$ is divisorial exactly when $\xi_w$ is
quasi-regular.  Apply Lemma~\ref{lem:real-cone-nearby} to the cone
$x_0\in(X_0,\Delta_0)$ with Reeb vector $\xi_*$ and shrink $U$ so that every $\xi_w$ belongs to the resulting neighborhood of $\xi_*$.

The proof of \cite[Proposition~2.59]{LX18}, applied to the original
minimizer $v_*$ and the original pair $(X,\Delta)$, shows after a further
shrinking of $U$ that every divisorial $w\in U$ is proportional to the
valuation of a Koll\'ar component $S_w$.  The corresponding test configuration
has central fiber $(X_0,\Delta_0)$.  Although \cite[Proposition~2.59]{LX18} was stated for rational
boundaries, the same argument applies to real coefficients; see
\cite[Section~2.6 and Proof of Theorem~2.19]{Zhu24b}.  Hence the log Fano pair
$(S_w,\Delta_{S_w})$ obtained by adjunction is isomorphic to the log Fano pair
attached to the quasi-regular Reeb vector
$\xi_w$ by \cite[Lemma~2.10 and Proof of Theorem~2.17]{XZ26}.
By Lemma~\ref{lem:real-cone-nearby},
$\delta(S_w,\Delta_{S_w})\ge1-\varepsilon$, which proves the proposition.
\end{proof}

\subsection{An inequality for Koll\'ar components}

\begin{proposition}\label{prop:component}
Let $x\in(X,\Delta)$ be an $n$-dimensional klt germ, where $n\ge2$, and let $S$ be a Koll\'ar component over $x$.  Then
\[
  \delta(S,\Delta_S)^{n-1}\volhat_{X,\Delta}(\ord_S)
  \le n^{n-1}\frac{A_{X,\Delta}(S)}{I(S)}
  \le n^{n-1}\mld_x(X,\Delta).
\]
\end{proposition}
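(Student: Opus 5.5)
Write $A:=A_{X,\Delta}(S)$, $d:=n-1=\dim S$ and $L:=-(K_S+\Delta_S)$. By Lemma~\ref{lem:kollar-volume}, $\volhat_{X,\Delta}(\ord_S)=A\,L^{d}$. The first inequality is then equivalent to
\[
  \delta(S,\Delta_S)^{d}L^{d}\le \frac{n^{d}}{I(S)}.
\]
The second inequality should be easy: it amounts to $A/I(S)\le\mld_x(X,\Delta)$. So the plan is to produce a divisor over $x$ of log discrepancy at most $A/I(S)$, and separately to bound $\delta^dL^d$ via Lemma~\ref{lem:local-global}.

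\emph{First inequality.} Choose $p\in S$ with $\ind_p(Y,S)=I(S)=:r$. Applying Lemma~\ref{lem:local-global} to the log Fano pair $(S,\Delta_S)$ at $p$ gives
\[
  \delta(S,\Delta_S)^dL^d\le\left(\frac{n}{d}\right)^d\volhat(p,S,\Delta_S).
\]
It therefore suffices to show $\volhat(p,S,\Delta_S)\le d^d/r$.

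For this, take the index-one cover $g\colon\widetilde Y\to Y$ of $S$ near $p$ from Lemma~\ref{lem:index-cover}. It is cyclic of degree $r$, quasi-\'etale, and totally ramified over $p$. Put $\widetilde S:=g^*S$, which is Cartier. Since $(Y,S+\Delta_Y)$ is plt near $p$, the pullback $(\widetilde Y,\widetilde S+\widetilde\Delta)$ is plt, so $\widetilde S$ is normal. The standard fact is that a plt cover along a non-Cartier divisor induces a cover $\widetilde S\to S$ of degree $r$. Its ramification is encoded in the different: adjunction gives $K_{\widetilde S}+\widetilde\Delta_{\widetilde S}=h^*(K_S+\Delta_S)$, where $h\colon\widetilde S\to S$ is Galois of degree $r$ with $h^{-1}(p)=\{\widetilde p\}$.

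Lemma~\ref{lem:finite-degree} then gives $\volhat(\widetilde p,\widetilde S,\widetilde\Delta_{\widetilde S})=r\,\volhat(p,S,\Delta_S)$. Lemma~\ref{lem:local-upper} bounds the left side by $d^d$, so $\volhat(p,S,\Delta_S)\le d^d/r$, as required.

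The main obstacle is checking that $\widetilde S\to S$ really has degree $r$, rather than $\widetilde S$ splitting or acquiring extra ramification. I would argue this using the local cyclic group $\Z/r$ generated by $S$ in the class group. For a plt pair, the restriction of this group to the local class group of $S$ (via the different) is injective, in the sense of Koll\'ar's ``Flips and abundance''/Shokurov's analysis of the different. Consequently the cover restricted to $S$ stays irreducible of degree $r$. The crepant-pullback relation is then immediate from adjunction.

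\emph{Second inequality.} Again at $p$ with index $r$, I would find a divisor $F$ over $Y$ centered in $S$ through $p$ such that $\ord_F S=a/r$ with $a$ an integer and $A_{Y,S+\Delta_Y}(F)\le a/r$. One candidate is a toric/weighted blow-up in the index-one cover coordinates, where $S$ is locally $\{z=0\}$ on the quotient. Then
\[
  A_{X,\Delta}(F)=A_{Y,S+\Delta_Y}(F)-\ord_F S+A\,\ord_F S\le A\,\ord_F S.
\]
Taking $F$ with $\ord_F S=1/r$ (possible because the cyclic cover is totally ramified, so there is a valuation on $Y$ whose pullback to $\widetilde Y$ is $\ord_{\widetilde S}$-type with $v(S)=1/r$ and log discrepancy $\le 1/r$ relative to $S$) yields $\mld_x(X,\Delta)\le A/r$. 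I expect this to follow from the same plt cover structure, via quasi-monomial valuations in coordinates adapted to $\widetilde S$, pushed down and approximated by divisors using the infimum attainment of \cite[Lemma~3.5]{CH21}.
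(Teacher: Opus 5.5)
\textbf{Verdict: there is a genuine gap in the second inequality.} You argue it in the wrong direction, and the inequality you actually conclude is false in general.

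\textbf{First inequality.} Your argument is the paper's: pick $p$ with $\ind_p(Y,S)=I(S)=r$, take the cyclic index-one cover of $S$ at $p$, restrict it to $\widetilde S$, and combine Lemmas~\ref{lem:finite-degree}, \ref{lem:local-upper} and~\ref{lem:local-global}. The step you call the main obstacle needs no class-group injectivity:
\begin{itemize}
\item every component of $g^{-1}(S)$ maps onto $S$, so it passes through the unique preimage $\widetilde p$ of $p$;
\item plt forces distinct components of $\widetilde S$ to be disjoint, so $\widetilde S$ is prime near $\widetilde p$;
\item $g$ is quasi-\'etale, hence \'etale over the generic point of $S$, so $\widetilde S\to S$ has degree $r$.
\end{itemize}
Extra ramification in codimension one on $\widetilde S$ is harmless, since it is absorbed into $\widetilde\Delta_{\widetilde S}$ by the crepant pullback.

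\textbf{Second inequality: wrong direction.} The claim $A/I(S)\le\mld_x(X,\Delta)$ is a \emph{lower} bound on the mld. So you must show that \emph{every} prime divisor $F$ over $X\ni x$ satisfies $A_{X,\Delta}(F)\ge A/I(S)$. Producing one $F$ with $A_{X,\Delta}(F)\le A/r$ instead gives $\mld_x(X,\Delta)\le A/r$, which is exactly what you conclude. That inequality fails in general. Take the $(1,2)$-weighted blow-up of a smooth surface point: then $A=3$, and $I(S)=2$ because of the $\frac12(1,1)$ point of $Y$. But $\mld=2>3/2$.

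Your discrepancy formula is also wrong. From $K_Y+S+\Delta_Y=\pi^*(K_X+\Delta)+AS$ one gets
\[
A_{X,\Delta}(F)=A_{Y,S+\Delta_Y}(F)+A\ord_F S,
\]
with no $-\ord_F S$ term. So $A_{X,\Delta}(F)\le A\ord_F S$ would require $A_{Y,S+\Delta_Y}(F)\le0$, which is impossible because $(Y,S+\Delta_Y)$ is plt.

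\textbf{The fix.} The correct formula gives the proof directly.
\begin{itemize}
\item If $F\ne S$, its center $\eta$ on $Y$ lies in $S$. Put $m:=\ind_\eta(Y,S)\le I(S)$. Then $mS$ is Cartier at $\eta$, so $\ord_F S\ge1/m\ge1/I(S)$.
\item Plt gives $A_{Y,S+\Delta_Y}(F)>0$, hence $A_{X,\Delta}(F)>A/I(S)$.
\item For $F=S$, trivially $A\ge A/I(S)$.
\end{itemize}
Taking the infimum over $F$ proves the inequality. No weighted blow-ups, quasi-monomial approximations, or attainment results for the mld are needed.
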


\begin{proof}
Let $\pi\colon(Y,S+\Delta_Y)\to(X,\Delta)$ be the plt blow-up extracting $S$, and put $d:=n-1$ and
$L:=-(K_S+\Delta_S)$.  Choose $p\in S$ such that
$\ind_p(Y,S)=I(S)$.  By \cite[Theorem~A.1]{HLS26}, we have
$\ind_p(S,S|_S)=I(S)$.  Apply the one-divisor case of
Lemma~\ref{lem:index-cover} to $S$ on the germ $p\in Y$.
Let $h\colon(\widetilde p\in\widetilde Y)\to(p\in Y)$ be the resulting
degree-$I(S)$ cyclic quasi-\'etale Galois cover, and put
$\widetilde S:=h^{-1}(S)_{\mathrm{red}}$.  By construction, $h^*S$ is
Cartier near $\widetilde p$, and quasi-\'etaleness implies
$h^*S=\widetilde S$.  Define $\widetilde\Delta_Y$ by
\[
  K_{\widetilde Y}+\widetilde S+\widetilde\Delta_Y
  =h^*(K_Y+S+\Delta_Y).
\]
Then $\widetilde\Delta_Y=h^*\Delta_Y\ge0$.  By
\cite[Proposition~5.20(3)]{KM98},
$(\widetilde Y,\widetilde S+\widetilde\Delta_Y)$ is plt.  Every
irreducible component of $h^{-1}(S)_{\mathrm{red}}$ contains
$\widetilde p$, since it maps onto $S$ and
$h^{-1}(p)=\{\widetilde p\}$.  Since the pair is plt, distinct
irreducible components of $\widetilde S$ are disjoint and each component is
normal.  Hence $h^{-1}(S)_{\mathrm{red}}=\widetilde S$ is prime and normal near
$\widetilde p$.  Moreover, $h$ is \'etale at the generic point of $S$.
Thus the induced morphism
$g\colon(\widetilde p\in\widetilde S)\to(p\in S)$ is cyclic of degree
$I(S)$.  By the adjunction formula, we may write
\[
  K_{\widetilde S}+\widetilde\Delta_S
  =g^*(K_S+\Delta_S).
\]
Then $(\widetilde S,\widetilde\Delta_S)$ is klt.
By Lemmas~\ref{lem:finite-degree} and~\ref{lem:local-upper},
\[
  I(S)\volhat(p,S,\Delta_S)
  =\volhat(\widetilde p,\widetilde S,\widetilde\Delta_S)
  \le d^d.
\]
By Lemma~\ref{lem:local-global} and the preceding inequality,
\[
  \delta(S,\Delta_S)^dL^d
  \le\left(\frac{n}{d}\right)^d
  \volhat(p,S,\Delta_S)
  \le\frac{n^d}{I(S)}.
\]
Multiplication by $A_{X,\Delta}(S)$ and Lemma~\ref{lem:kollar-volume} prove the first inequality.

Let $F$ be a prime divisor over $X\ni x$.  If $F\ne S$, then
its center $\eta$ on $Y$ lies in $S$.  Let $m:=\ind_\eta(Y,S)$.  Since
$mS$ is Cartier near $\eta$ and $\ord_F(mS)\in\Z_{>0}$, we have
$\ord_F(S)\ge1/m\ge1/I(S)$.  Since $(Y,S+\Delta_Y)$ is plt,
\[
  A_{X,\Delta}(F)
  =A_{Y,S+\Delta_Y}(F)+A_{X,\Delta}(S)\ord_F(S)
  >\frac{A_{X,\Delta}(S)}{I(S)}.
\]
For $F=S$, $A_{X,\Delta}(S)/I(S)\le A_{X,\Delta}(S)$.  Taking the infimum over $F$ proves the second inequality.
\end{proof}

\section{Proofs of the main results}\label{sec:sharp}

\subsection{The sharp inequality}

Blum proved the following inequality and its equality characterization when $\Delta=0$ \cite[Lemma~4.7]{Blu18}.  The same proof applies to pairs after using the Izumi-type estimate for klt pairs \cite[Lemma~2.14]{HLQ23} to obtain $0<w(\mathfrak a_\bullet(v_*))<+\infty$.

\begin{lemma}\label{lem:valuation-ideal}
Let $x\in(X,\Delta)$ be an $n$-dimensional klt germ, and let $v_*$
be a minimizer of $\volhat_{X,\Delta}$.  For $w\in\Val_{X,x}$, set
$w(\mathfrak a_\bullet(v_*))
  :=\inf_{k\ge1}\frac{w(\mathfrak a_k(v_*))}{k}$. Then
\begin{equation}\label{eq:valuation-ideal}
  A_{X,\Delta}(v_*)
  \le
  \frac{A_{X,\Delta}(w)}{w(\mathfrak a_\bullet(v_*))}.
\end{equation}
Moreover, equality holds if and only if $w$ is proportional to $v_*$.
\end{lemma}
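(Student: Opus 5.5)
The plan is to adapt Blum's argument for \cite[Lemma~4.7]{Blu18} by comparing volumes of the graded sequence $\mathfrak a_\bullet(v_*)$ with valuation ideals of $w$. We may assume $A_{X,\Delta}(w)<+\infty$, since otherwise there is nothing to prove, and we normalize so that $c:=w(\mathfrak a_\bullet(v_*))$ is a positive finite number. The Izumi-type estimate \cite[Lemma~2.14]{HLQ23} is what gives $0<c<+\infty$ for pairs: it gives $w\ge \epsilon\,\ord_x$ and $v_*\le C\ord_x$ up to constants, so $\mathfrak a_k(v_*)\subseteq\mathfrak m_x^{\lceil k/C\rceil}$, whence $c\ge\epsilon/C>0$, while finiteness follows because $\mathfrak a_k(v_*)\supseteq\mathfrak m_x^{\lceil k/c'\rceil}$ and $w(\mathfrak m_x)<\infty$.

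The key step is the inclusion $\mathfrak a_k(v_*)\subseteq\mathfrak a_{ck}(w)$ for all $k$, which is immediate from the definition of $c$ as an infimum. Taking colengths gives $\vol(w)\le \vol(\mathfrak a_\bullet(v_*))/c^n=\vol(v_*)/c^n$. Consequently
\[
  \volhat_{X,\Delta}(v_*)\le\volhat_{X,\Delta}(w)=A_{X,\Delta}(w)^n\vol(w)\le \frac{A_{X,\Delta}(w)^n}{c^n}\vol(v_*),
\]
and dividing by $\vol(v_*)>0$ and taking $n$-th roots gives \eqref{eq:valuation-ideal}. Blum's proof may instead go through the normalized multiplicity $\operatorname{lct}^n\cdot e$ and the inequality $\operatorname{lct}(\mathfrak a_\bullet(v_*))\le A(w)/w(\mathfrak a_\bullet(v_*))$. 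That route works equally well here, provided one also knows that $v_*$ computes $\operatorname{lct}(\mathfrak a_\bullet(v_*))$ with value $A(v_*)$ (Liu's identification of $\volhat$ with the infimum of normalized multiplicities, valid for pairs). The direct chain above avoids that input.

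The main obstacle is the equality case. If equality holds, then every inequality in the chain is an equality. In particular $\vol(w)=\vol(v_*)/c^n$, so $\vol(\mathfrak a_\bullet(v_*))=\vol(\mathfrak a_\bullet(c^{-1}w))$ with $\mathfrak a_\bullet(v_*)\subseteq\mathfrak a_\bullet(c^{-1}w)$. Also $\volhat(w)=\volhat(v_*)$, so $w$ is itself a minimizer, and the uniqueness up to scaling in Theorem~\ref{thm:minimizer} forces $w$ to be proportional to $v_*$. This shortcut is the cleanest route and needs only the minimizer theorem for pairs. Conversely, if $w=\lambda v_*$, then $c=\lambda$ and $A(w)=\lambda A(v_*)$, so equality holds trivially. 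The only remaining check is that both $A_{X,\Delta}$ and $\vol$ behave homogeneously for real boundaries, which is immediate from the definitions.
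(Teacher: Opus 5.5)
Your proof is correct. For the inequality it follows the paper's route, which is Blum's argument with the Izumi-type estimate for klt pairs used exactly to get $0<c<+\infty$. The inclusion $\mathfrak a_k(v_*)\subseteq\mathfrak a_{ck}(w)$ gives $c^n\vol(w)\le\vol(v_*)$, and minimality of $v_*$ finishes.

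The genuine difference is the equality case. You deduce proportionality from $\volhat_{X,\Delta}(w)=\volhat_{X,\Delta}(v_*)$ via the uniqueness part of Theorem~\ref{thm:minimizer}. Blum's lemma predates the Xu--Zhuang uniqueness theorem, so the route the paper invokes must conclude without it. The ingredients you already wrote down suffice for that. Put $w':=c^{-1}w$. Then $v_*\le w'$ pointwise (apply $w(\mathfrak a_k(v_*))\ge ck$ to powers of $f$), and $\vol(w')=\vol(v_*)>0$. Suppose $a:=v_*(f)<b:=w'(f)$ for some $f$, and take $k=\lfloor\theta m\rfloor$ with $0<\theta b<1$. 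Multiplication by $f^k$ induces an injection
\[
  \mathfrak a_{m-kb}(v_*)/\mathfrak a_{m-ka}(v_*)\hookrightarrow\mathfrak a_m(w')/\mathfrak a_m(v_*).
\]
The target has length $o(m^n)$ because the two volumes agree. The source has length asymptotic to $\vol(v_*)\bigl((1-\theta a)^n-(1-\theta b)^n\bigr)m^n/n!$, which is of order $m^n$. This is a contradiction, so $w=cv_*$.

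What each route buys: your shortcut is admissible here, because the paper imports Theorem~\ref{thm:minimizer} as a black box and already uses its uniqueness statement in Lemma~\ref{lem:mld-volume-minimizer}. The direct argument is elementary, works verbatim for real boundaries, and keeps the lemma independent of the much deeper uniqueness theorem, consistent with its historical order.

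Two small points you should state explicitly:
\begin{enumerate}
\item The converse uses $v_*(\mathfrak a_\bullet(v_*))=1$. This holds because $k\le v_*(\mathfrak a_k(v_*))<k+v_*(g)$ for any fixed nonzero $g\in\mathfrak m_x$.
\item When $A_{X,\Delta}(w)=+\infty$, the equality clause needs $A_{X,\Delta}(v_*)<+\infty$.
\end{enumerate}
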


We will use the following lemma in the equality case.

\begin{lemma}\label{lem:mld-volume-minimizer}
Let $x\in(X,\Delta)$ be an $n$-dimensional klt germ, where $n\ge2$, and assume that
\[
  \volhat(x,X,\Delta)=n^{n-1}\mld_x(X,\Delta).
\]
Then there is a unique prime divisor $E$ over $X\ni x$ computing
$\mld_x(X,\Delta)$.  Moreover,
\[
  \volhat_{X,\Delta}(\ord_E)=\volhat(x,X,\Delta).
\]
\end{lemma}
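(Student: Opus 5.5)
The plan is to combine the Kollár component approximation (Proposition~\ref{prop:kollar-approximation}) with the two inequalities of Proposition~\ref{prop:component}, and to use the equality clause of Lemma~\ref{lem:valuation-ideal} to identify the minimizer with a divisor computing the mld.

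\emph{Step 1: a sequence of Kollár components.} Fix a minimizer $v_*$ (Theorem~\ref{thm:minimizer}) and a log smooth model adapted to it. By Proposition~\ref{prop:kollar-approximation}, choose divisorial valuations $w_j\to v_*$ in $\operatorname{QM}(W,D)$ with $w_j=c_j\ord_{S_j}$, where each $S_j$ is a Kollár component with $\delta(S_j,\Delta_{S_j})\ge 1-\varepsilon_j$ and $\varepsilon_j\to0$. Continuity of $A_{X,\Delta}$ and of $\vol$ on $\operatorname{QM}(W,D)$ gives $\volhat_{X,\Delta}(\ord_{S_j})\to\volhat(x,X,\Delta)$. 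Write $m:=\mld_x(X,\Delta)$ and $I_j:=I(S_j)$. Using the hypothesis $\volhat(x,X,\Delta)=n^{n-1}m$ and both inequalities of Proposition~\ref{prop:component}, we get
\[
  (1-\varepsilon_j)^{n-1}\volhat_{X,\Delta}(\ord_{S_j})\le n^{n-1}\frac{A_{X,\Delta}(S_j)}{I_j}\le n^{n-1}m .
\]
Since the left side tends to $n^{n-1}m$, it follows that $A_{X,\Delta}(S_j)/I_j\to m$.

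\emph{Step 2: pass to the limit for a divisor computing the mld.} Let $E$ be any prime divisor over $X\ni x$ with $A_{X,\Delta}(E)=m$; one exists by \cite[Lemma~3.5]{CH21}. Suppose $E\ne S_j$ for infinitely many $j$. The plt computation in the proof of Proposition~\ref{prop:component} gives
\[
  m=A_{X,\Delta}(E)> A_{X,\Delta}(S_j)\ord_E(S_j).
\]
I will rewrite $\ord_E(S_j)$ in terms of the normalized valuations $\ord_{S_j}/A_{X,\Delta}(S_j)$, which converge to $v_*/A_{X,\Delta}(v_*)$. The aim is to show that
\[
  \liminf_j A_{X,\Delta}(S_j)\ord_E(S_j)\ \ge\ A_{X,\Delta}(v_*)\,\ord_E\bigl(\mathfrak a_\bullet(v_*)\bigr),
\]
using that $\ord_E$ is evaluated through the pullback of $S_j$, and the Izumi-type comparison that underlies Lemma~\ref{lem:valuation-ideal}.

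\emph{Step 3: apply the equality clause.} Combining Step 2 with Lemma~\ref{lem:valuation-ideal} for $w=\ord_E$ gives
\[
  A_{X,\Delta}(v_*)\,\ord_E\bigl(\mathfrak a_\bullet(v_*)\bigr)\le A_{X,\Delta}(E)=m.
\]
The hypothesis, together with the limit $A_{X,\Delta}(S_j)/I_j\to m$ from Step 1, should squeeze this chain into an equality. The equality case of Lemma~\ref{lem:valuation-ideal} then forces $\ord_E$ to be proportional to $v_*$. So $\ord_E$ is a minimizer, that is, $\volhat_{X,\Delta}(\ord_E)=\volhat(x,X,\Delta)$. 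Uniqueness follows from the uniqueness of the minimizer up to scaling (Theorem~\ref{thm:minimizer}): any two divisors computing $m$ both have valuations proportional to $v_*$, hence are equal. The alternative case, $E=S_j$ for infinitely many $j$, gives the same conclusion directly, since then $w_j\to v_*$ with $w_j$ proportional to $\ord_E$.

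\emph{Main obstacle.} The hardest step is the limiting inequality in Step 2: the components $S_j$ vary, and $A_{X,\Delta}(S_j)$ may blow up when $v_*$ is irrational. My first attempt would work entirely with the normalized valuations $\ord_{S_j}/A_{X,\Delta}(S_j)$. I would bound $\ord_E$ of the ideals $\mathfrak a_k(\ord_{S_j})$ from below by $\ord_E(\mathfrak a_k(v_*))$ up to a factor tending to $1$, using the uniform comparison of valuation ideals on a neighborhood in $\operatorname{QM}(W,D)$ \cite[Lemma~2.10]{LX18}. If this fails, I would instead exploit the strict inequality $A_{X,\Delta}(F)>A_{X,\Delta}(S_j)/I_j$ for $F\ne S_j$, together with $A_{X,\Delta}(S_j)/I_j\to m$, to force $E=S_j$ for large $j$.
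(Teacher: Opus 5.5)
Your Step~1, the case $E=S_j$ for infinitely many $j$, and the uniqueness argument are fine and agree with the paper. Steps~2--3, however, have a genuine gap: the inequality you aim for points the wrong way.

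Write $A:=A_{X,\Delta}$ and $\mathfrak a_\bullet:=\mathfrak a_\bullet(v_*)$. Lemma~\ref{lem:valuation-ideal} with $w=\ord_E$ already gives $A(v_*)\ord_E(\mathfrak a_\bullet)\le m$. Step~1 together with $A(S_j)/I_j\le A(S_j)\ord_E(S_j)<m$ gives $A(S_j)\ord_E(S_j)\to m$. So your target $\liminf_j A(S_j)\ord_E(S_j)\ge A(v_*)\ord_E(\mathfrak a_\bullet)$ is true but carries no information. The resulting chain $A(v_*)\ord_E(\mathfrak a_\bullet)\le\lim_j A(S_j)\ord_E(S_j)=m$ cannot be squeezed, because nothing pushes the left end up to $m$. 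The equality clause needs the reverse bound $\limsup_j A(S_j)\ord_E(S_j)\le A(v_*)\ord_E(\mathfrak a_\bullet)$, which yields $A(E)/\ord_E(\mathfrak a_\bullet)\le A(v_*)$. Your fallback does not work either: $A(E)>A(S_j)/I_j$ and $A(S_j)/I_j\to m=A(E)$ are compatible with $E\ne S_j$ for every $j$.

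The tool you mention, evaluating $\ord_E$ through the pullback of $S_j$, in fact gives exactly this upper bound. On the plt blow-up $\pi_j\colon Y_j\to X$ extracting $S_j$, the divisor $\operatorname{div}(\pi_j^*f)-\ord_{S_j}(f)S_j$ is effective and $\Q$-Cartier. Hence $\ord_E(f)\ge\ord_E(S_j)\ord_{S_j}(f)$ for every $f\in\mathcal O_{X,x}$, so $\ord_E(\mathfrak a_\bullet)\ge\ord_E(S_j)\ord_{S_j}(\mathfrak a_\bullet)$ and
\[
  A(S_j)\ord_E(S_j)\le\ord_E(\mathfrak a_\bullet)\cdot\frac{A(S_j)}{\ord_{S_j}(\mathfrak a_\bullet)}.
\]
It remains to show $\limsup_j A(S_j)/\ord_{S_j}(\mathfrak a_\bullet)\le A(v_*)$. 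First replace the quasi-monomial cone by its smallest face containing $v_*$ and take the approximants in that face, so that $v_*$ has all weights positive. Then rescale to $v_j:=c_j\ord_{S_j}$ with $A(v_j)=A(v_*)$. Now $v_j(f)\ge(1-\varepsilon_j)v_*(f)$ for all $f$, with $\varepsilon_j\to0$. So $v_j(\mathfrak a_\bullet)\ge1-\varepsilon_j$, and
\[
  \frac{A(S_j)}{\ord_{S_j}(\mathfrak a_\bullet)}=\frac{A(v_j)}{v_j(\mathfrak a_\bullet)}\le\frac{A(v_*)}{1-\varepsilon_j}.
\]
This is the paper's argument. Note that the uniform control needed is a lower bound for $\ord_{S_j}$ on the fixed ideals $\mathfrak a_k(v_*)$. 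It is not a bound for $\ord_E$ on the varying ideals $\mathfrak a_k(\ord_{S_j})$, as in your ``main obstacle'' paragraph.
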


\begin{proof}
\noindent\textit{Step 1.}
Let $E$ be any prime divisor over $X\ni x$ computing $\mld_x(X,\Delta)$, so
\begin{equation}\label{eq:mld-attained}
  A_{X,\Delta}(E)=\mld_x(X,\Delta).
\end{equation}
By Theorem~\ref{thm:minimizer}, let $v_*$ be a minimizer of $\volhat_{X,\Delta}$, and put $d:=n-1$.  Replace the adapted quasi-monomial cone by its smallest face containing $v_*$, so that $v_*$ lies in its relative interior.  By Proposition~\ref{prop:kollar-approximation} and the density of rational rays in this face, there are Koll\'ar components $S_i$, positive numbers $c_i$, and numbers $\tau_i\to0$ such that
\[
  v_i:=c_i\ord_{S_i}\to v_*,
  \qquad
  \delta(S_i,\Delta_{S_i})\ge1-\tau_i.
\]
Since log discrepancy is continuous on the chosen quasi-monomial cone,
$A_{X,\Delta}(v_i)\to A_{X,\Delta}(v_*)$.  Replacing $v_i$ by
$\frac{A_{X,\Delta}(v_*)}{A_{X,\Delta}(v_i)}v_i$, we may assume that
$A_{X,\Delta}(v_i)=A_{X,\Delta}(v_*)$, while still $v_i\to v_*$.  If $S_i=E$ for infinitely many $i$, then along this subsequence
\[
  v_i=\frac{A_{X,\Delta}(v_*)}{A_{X,\Delta}(E)}\ord_E.
\]
Hence $v_*$ is proportional to $\ord_E$, so $\ord_E$ minimizes the normalized
volume.  Otherwise, after discarding finitely many terms, we may assume that
$S_i\ne E$ for every $i$.  It remains to treat this case.

By the equality in the statement, the minimality of $v_*$, and
Proposition~\ref{prop:component}, we have
\begin{equation}\label{eq:component-limit}
\begin{split}
  n^d\mld_x(X,\Delta)\delta(S_i,\Delta_{S_i})^d
  &=\delta(S_i,\Delta_{S_i})^d\volhat_{X,\Delta}(v_*)\\
  &\le\delta(S_i,\Delta_{S_i})^d\volhat_{X,\Delta}(\ord_{S_i})\\
  &\le n^d\frac{A_{X,\Delta}(S_i)}{I(S_i)}
  \le n^d\mld_x(X,\Delta).
\end{split}
\end{equation}
In particular, $\delta(S_i,\Delta_{S_i})\le1$.  Since
$\delta(S_i,\Delta_{S_i})\ge1-\tau_i$, we have
$\delta(S_i,\Delta_{S_i})\to1$, and \eqref{eq:component-limit} gives
\[
  \frac{A_{X,\Delta}(S_i)}{I(S_i)}\to\mld_x(X,\Delta).
\]
For each $i$, let $\pi_i\colon Y_i\to X$ be the plt blow-up extracting $S_i$, and let $\Delta_{Y_i}$ be the strict transform of $\Delta$.  Then
\[
  \mld_x(X,\Delta)
  =A_{Y_i,S_i+\Delta_{Y_i}}(E)
   +A_{X,\Delta}(S_i)\ord_E(S_i).
\]
The first term is positive, and
$\ord_E(S_i)\ge1/I(S_i)$.  Hence
\[
  \frac{A_{X,\Delta}(S_i)}{I(S_i)}
  \le A_{X,\Delta}(S_i)\ord_E(S_i)
  <\mld_x(X,\Delta).
\]
Together with the preceding inequalities, this gives
\begin{equation}\label{eq:product-limit}
  A_{X,\Delta}(S_i)\ord_E(S_i)
  \to\mld_x(X,\Delta).
\end{equation}

\smallskip
\noindent\textit{Step 2.}
Put $\mathfrak a_k:=\mathfrak a_k(v_*)$ and
$\mathfrak a_\bullet:=\mathfrak a_\bullet(v_*)$ as in
Lemma~\ref{lem:valuation-ideal}.  Since $v_*$ lies in the relative interior of the chosen face and
$v_i\to v_*$, there are numbers $\varepsilon_i\to0$ such that
$v_i(f)\ge(1-\varepsilon_i)v_*(f)$ for every $f\in\mathcal O_{X,x}$.  Hence
$v_i(\mathfrak a_\bullet)\ge1-\varepsilon_i$.  By
\eqref{eq:valuation-ideal} and
$A_{X,\Delta}(v_i)=A_{X,\Delta}(v_*)$, we have
$v_i(\mathfrak a_\bullet)\le1$.  Consequently,
\begin{equation}\label{eq:ideal-limit}
 \lim_{i\to +\infty} \frac{A_{X,\Delta}(S_i)}{\ord_{S_i}(\mathfrak a_\bullet)}
  =\lim_{i\to +\infty}\frac{A_{X,\Delta}(v_i)}{v_i(\mathfrak a_\bullet)}
  =A_{X,\Delta}(v_*).
\end{equation}

For every $f\in\mathcal O_{X,x}$, we have
$\ord_E(f)\ge\ord_{S_i}(f)\ord_E(S_i)$, and hence
$\ord_E(\mathfrak a_\bullet)\ge
\ord_E(S_i)\ord_{S_i}(\mathfrak a_\bullet)$.  Therefore
\[
\begin{aligned}
  \frac{A_{X,\Delta}(E)}{\ord_E(\mathfrak a_\bullet)}
  &\le \lim_{i\to +\infty}
  \frac{A_{X,\Delta}(E)}
       {\ord_E(S_i)\ord_{S_i}(\mathfrak a_\bullet)}\\
  &=\lim_{i\to +\infty}
  \left(\frac{\mld_x(X,\Delta)}
       {A_{X,\Delta}(S_i)\ord_E(S_i)}\cdot
  \frac{A_{X,\Delta}(S_i)}
       {\ord_{S_i}(\mathfrak a_\bullet)}\right)
  = A_{X,\Delta}(v_*).
\end{aligned}
\]
The reverse inequality is \eqref{eq:valuation-ideal} with $w=\ord_E$.
Thus equality holds in \eqref{eq:valuation-ideal}.  By the equality
characterization in Lemma~\ref{lem:valuation-ideal}, $\ord_E$ is
proportional to $v_*$ and therefore minimizes the normalized volume.

Thus, in either case, $\ord_E$ minimizes the normalized volume.  Since $E$ was
arbitrary, every prime divisor computing $\mld_x(X,\Delta)$ has this property.
If $F$ is another prime divisor over $X\ni x$ computing
$\mld_x(X,\Delta)$, Theorem~\ref{thm:minimizer} gives
$\ord_F=c\ord_E$ for some $c>0$.  Then
$0<A_{X,\Delta}(F)=A_{X,\Delta}(E)
=A_{X,\Delta}(c\ord_E)=cA_{X,\Delta}(E)$,
so $c=1$.  Hence $F=E$.
\end{proof}

The following proposition identifies the equality case once such a divisor is available.

\begin{proposition}\label{prop:equality-case}
Let $x\in(X,\Delta)$ be an $n$-dimensional klt germ, where $n\ge2$, and assume that
\[
  \volhat(x,X,\Delta)=n^{n-1}\mld_x(X,\Delta).
\]
Let $E$ be a prime divisor over $X\ni x$ such that
\[
  A_{X,\Delta}(E)=\mld_x(X,\Delta),
  \qquad
  \volhat_{X,\Delta}(\ord_E)=\volhat(x,X,\Delta).
\]
Then $E$ is a Koll\'ar component isomorphic to $\mathbb P^{n-1}$,
$\Delta=0$ near $x$, and analytically,
$(x\in X)\cong\frac{1}{r}(1,\ldots,1)$ for some integer $r\ge1$.
\end{proposition}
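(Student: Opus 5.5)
Since $\ord_E$ minimizes $\volhat_{X,\Delta}$ and is divisorial, I would first invoke the Li--Xu theorem \cite{LX20} that a divisorial minimizer of the normalized volume is (the valuation of) a Koll\'ar component, and that the induced log Fano pair $(E,\Delta_E)$ is K-semistable. This gives $\delta(E,\Delta_E)\ge 1$; for real coefficients the same K-semistability follows from the degeneration argument of Proposition~\ref{prop:kollar-approximation}, because a divisorial minimizer gives a quasi-regular Reeb vector, so the cone is regular and no approximation is needed. Taking $S=E$ in Proposition~\ref{prop:component} and using the hypotheses then yields the chain
\[
  n^{n-1}\mld_x(X,\Delta)=\volhat_{X,\Delta}(\ord_E)\le\delta(E,\Delta_E)^{n-1}\volhat_{X,\Delta}(\ord_E)\le n^{n-1}\frac{A_{X,\Delta}(E)}{I(E)}\le n^{n-1}\mld_x(X,\Delta).
\]
All of these are therefore equalities. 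Consequently $I(E)=1$ (because $A_{X,\Delta}(E)=\mld_x$), $\delta(E,\Delta_E)=1$, and every inequality in the proof of Proposition~\ref{prop:component} is an equality.

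Next I would trace those equalities. Since $I(E)=1$, the cover $h$ is trivial, and equality in Lemma~\ref{lem:local-upper} gives $\volhat(p,E,\Delta_E)=d^d$ with $d=n-1$. By the equality case of that lemma, $p$ is a smooth point of $E$ and $p\notin\Supp\Delta_E$. Equality in Lemma~\ref{lem:local-global} also gives $\delta^dL^d=((d+1)/d)^d d^d=n^d$, hence $L^d=n^{d}$. The point $p$ was chosen only so that $\ind_p=I(E)=1$, and now every point has index $1$, so the argument applies to every closed point $p\in E$. Thus $E$ is smooth and $\Delta_E=0$; in particular the different vanishes and $\Delta_Y\cap E=\emptyset$. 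Then $E$ is a smooth K-semistable Fano $(n-1)$-fold with $(-K_E)^{n-1}=n^{n-1}=\vol(\mathbb P^{n-1})$. By the characterization of projective space as the unique K-semistable Fano variety of maximal volume (Liu's equality case, or Fujita's theorem), $E\cong\mathbb P^{n-1}$.

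It remains to identify the singularity. Since $\Delta_Y$ is disjoint from $E=\pi^{-1}(x)$, we get $\Delta=\pi_*\Delta_Y=0$ near $x$. Because $E$ is Cartier on $Y$ (as $I(E)=1$) and the vanishing different shows $Y$ is smooth along $E$ (the plt pair $(Y,E)$ with $E$ Cartier and smooth forces $Y$ smooth near $E$ by inversion of adjunction), the normal bundle is $\mathcal O_E(E)\cong\mathcal O_{\mathbb P^{n-1}}(-r)$ for some $r\ge1$. A smooth exceptional divisor $\mathbb P^{n-1}$ with normal bundle $\mathcal O(-r)$, contracted to a point, has the same analytic germ as the cone over $(\mathbb P^{n-1},\mathcal O(r))$, i.e.\ $\frac1r(1,\dots,1)$. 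To justify this I would use Grauert's formal neighborhood rigidity, using $H^1(E,T_E\otimes\mathcal O(kr))=0$ and $H^1(E,\mathcal O(kr))=0$ for $k\ge1$, or equivalently that $\operatorname{gr}_{\ord_E}\mathcal O_{X,x}\cong\bigoplus_k H^0(\mathcal O(kr))$ lifts. I expect this final analytic identification, together with the step establishing K-semistability of $E$ and the real-coefficient issues there, to be the main technical points; the numerical heart of the argument is the equality chain above.
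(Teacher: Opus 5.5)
Your proof is correct. Its first two stages match the paper, and the last stage takes a genuinely different route.

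\textbf{Where you follow the paper.} You get $\delta(E,\Delta_E)\ge1$ because a divisorial minimizer is a Koll\'ar component with K-semistable base. The paper obtains the same bound by applying Proposition~\ref{prop:kollar-approximation} to $\ord_E$, which lies in its own neighborhood, and letting $\varepsilon\to0$. That route also disposes of the real-coefficient issue without a separate cone argument. You then run the chain of Proposition~\ref{prop:component} with $S=E$ to get $I(E)=1$ and $\delta(E,\Delta_E)=1$. Next you use Lemmas~\ref{lem:local-global} and~\ref{lem:local-upper} at every closed point of $E$ to get that $E$ is smooth, $\Delta_E=0$ and $(-K_E)^{n-1}=n^{n-1}$. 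Finally you conclude $E\cong\mathbb P^{n-1}$ by Liu's (or Fujita's) characterization.

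\textbf{Where you differ: the analytic identification.} The paper uses the $\mathbb G_m$-degeneration induced by $\ord_E$ to $\mathbb A^n/\mu_r$ and splits into cases. For $r=1$ it uses openness of smoothness. For $n\ge3$ it uses Schlessinger's rigidity of isolated quotient singularities. For $n=2$, where such singularities are not rigid, it uses the minimal resolution and the Hirzebruch--Jung classification. You instead apply Grauert's theorem on neighborhoods of an exceptional submanifold with negative normal bundle $N=\mathcal O_{\mathbb P^{n-1}}(-r)$. The relevant groups vanish on $\mathbb P^{n-1}$ by the Euler sequence, so a neighborhood of $E$ in $Y$ is biholomorphic to a neighborhood of the zero section of $N$. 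Since $x\in X$ and the cone vertex are both normal contractions of these neighborhoods, the germs are analytically isomorphic.

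\textbf{Trade-off.} Your route treats all $n\ge2$ and $r\ge1$ uniformly, at the price of a transcendental input. The paper stays within algebraic degenerations and standard rigidity and classification results, but needs the case split.

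\textbf{Two small corrections.} First, smoothness of $Y$ along $E$ does not come from inversion of adjunction. It comes from the elementary fact used in the paper: if $E=(u=0)$ is Cartier and $\mathcal O_{Y,p}/(u)$ is regular, then $u\notin\mathfrak m_p^2$, so $\mathcal O_{Y,p}$ is regular. Second, the groups in Grauert's criterion are $H^1(E,T_E\otimes\mathcal O(kr))$ and $H^1(E,N\otimes N^{*k})=H^1(E,\mathcal O((k-1)r))$ for $k\ge1$. The latter includes $H^1(\mathcal O_E)$ at $k=1$, which also vanishes here, so the conclusion is unaffected.
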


\begin{proof}
\noindent\textit{Step 1.}
Put $d:=n-1$.  For every $\varepsilon>0$, apply
Proposition~\ref{prop:kollar-approximation} to $\ord_E$.  Since $\ord_E$
itself belongs to the resulting neighborhood, it is proportional to the
valuation of a Koll\'ar component.  Proportional divisorial valuations
determine the same prime divisor.  Hence $E$ is a Koll\'ar component and
$\delta(E,\Delta_E)\ge1-\varepsilon$.  Letting $\varepsilon\to0$, we have
\begin{equation}\label{eq:delta-lower}
  \delta(E,\Delta_E)\ge1.
\end{equation}

Write $\pi\colon(Y,E+\Delta_Y)\to(X,\Delta)$ for the plt blow-up
extracting $E$ and put $L:=-(K_E+\Delta_E)$.  The two equalities in the statement, Lemma~\ref{lem:kollar-volume}, and
equality in \eqref{eq:main} give
\[
  A_{X,\Delta}(E)L^d
  =\volhat_{X,\Delta}(\ord_E)
  =\volhat(x,X,\Delta)
  =n^dA_{X,\Delta}(E).
\]
Hence $L^d=n^d$.
Applying Proposition~\ref{prop:component} to $E$ and using \eqref{eq:delta-lower}, we have
\[
  1\le\delta(E,\Delta_E)^d\le\frac{1}{I(E)}\le1.
\]
Hence
\begin{equation}\label{eq:index-delta}
  I(E)=1,
  \qquad
  \delta(E,\Delta_E)=1.
\end{equation}

\smallskip
\noindent\textit{Step 2.}
For every closed point $p\in E$, Lemmas~\ref{lem:local-global} and~\ref{lem:local-upper} and \eqref{eq:index-delta} give
\[
  n^d
  \le\left(\frac{n}{d}\right)^d\volhat(p,E,\Delta_E)
  \le n^d.
\]
Thus $\volhat(p,E,\Delta_E)=d^d$ for every closed point $p\in E$.
By Lemma~\ref{lem:local-upper}, $E$ is smooth and $\Delta_E=0$.  By
\eqref{eq:index-delta}, $\delta(E,0)=1$ and $(-K_E)^d=(d+1)^d$.  In particular,
$(E,0)$ is K-semistable.  By \cite[Theorem~36]{Liu18}, $E\cong\mathbb P^d$.

\smallskip
\noindent\textit{Step 3.}
Since $I(E)=1$, the divisor $E$ is Cartier near $E$.  At a point
$p\in E$, write $E=(u=0)$ in the local ring $\mathcal O_{Y,p}$.  The quotient
$\mathcal O_{Y,p}/(u)=\mathcal O_{E,p}$ is regular.  If $u\in\mathfrak m_p^2$, then
$\operatorname{edim}\mathcal O_{Y,p}=\operatorname{edim}\mathcal O_{E,p}=d$, contradicting
$\dim\mathcal O_{Y,p}=d+1$.  Thus $u\notin\mathfrak m_p^2$ and
$\operatorname{edim}\mathcal O_{Y,p}=d+1$; hence $Y$ is smooth along $E$.

By the adjunction formula,
\[
  (K_Y+E+\Delta_Y)|_E=K_E+\Delta_E,
  \qquad
  (K_Y+E)|_E=K_E.
\]
Hence $\Delta_Y|_E=\Delta_E=0$, so no component of $\Delta_Y$ meets $E$.  Every component of $\Delta$ through $x$ would have strict transform meeting the exceptional fiber, so $\Delta=0$ near $x$.  Write $-E|_E\cong\mathcal O_{\mathbb P^d}(r)$, where $r\ge1$.
By \cite[Section~3]{LX20}, the valuation $\ord_E$ induces a flat
$\mathbb G_m$-equivariant degeneration whose restriction over $\mathbb G_m$ is the product of the germ $x\in X$ with $\mathbb G_m$ and whose central fiber is
\begin{equation}\label{eq:quotient-cone}
  \Spec\bigoplus_{j\ge0}
  H^0\bigl(\mathbb P^d,\mathcal O_{\mathbb P^d}(jr)\bigr)
  \cong\mathbb A^n/\mu_r.
\end{equation}

When $r=1$, the central fiber is smooth, so openness of smoothness and the
product description over $\mathbb G_m$ imply that $x\in X$ is smooth.  Assume
that $r>1$.  If $n\ge3$, the central fiber is an isolated quotient
singularity.  By Schlessinger's rigidity theorem \cite[Theorem~3]{Sch71},
the original germ is analytically isomorphic to the central fiber; cf.\
\cite[Proof of Proposition~4.5]{Liu25}.  Hence \eqref{eq:quotient-cone}
identifies $(x\in X)$ analytically with $\frac{1}{r}(1,\ldots,1)$.

Suppose that $n=2$.  Then $E\cong\mathbb P^1$ and $E^2=-r$.
Since $r>1$ and $E$ is the only exceptional curve, $\pi$ is the minimal
resolution near $E$.  By \cite[Proposition~4.18]{KM98}, the germ $x\in X$
is a quotient surface singularity.  The classification in \cite{Bri68}
shows that it is cyclic, say of type $\frac{1}{m}(1,a)$, and that its dual
resolution graph is determined by the Hirzebruch--Jung continued fraction
$m/a$.  Since the graph consists of a single vertex of weight $-r$, the
Hirzebruch--Jung continued fraction $m/a$ has the single entry $r$.  Thus
$m/a=r$.  Since $\gcd(m,a)=1$, we have $a=1$ and $m=r$, so
$(x\in X)\cong\frac{1}{r}(1,1)$.
\end{proof}

\begin{proof}[Proof of Theorem~\ref{thm:main}]
Put $d:=n-1$, and let $v_*$ be a minimizer of $\volhat_{X,\Delta}$.  Fix $0<\varepsilon<1$ and choose a divisorial valuation sufficiently close to $v_*$ in an adapted quasi-monomial cone.  By Proposition~\ref{prop:kollar-approximation}, it is proportional to the valuation of a Koll\'ar component $S$ satisfying
$\delta(S,\Delta_S)\ge1-\varepsilon$.  Since $\volhat_{X,\Delta}(v_*)=\volhat(x,X,\Delta)$,
Proposition~\ref{prop:component} gives
\[
  \volhat(x,X,\Delta)
  \le\volhat_{X,\Delta}(\ord_S)
  \le\frac{n^d}{(1-\varepsilon)^d}\mld_x(X,\Delta).
\]
Letting $\varepsilon\to0$ proves \eqref{eq:main}.

Suppose that equality holds.  The asserted equality statement follows from
Lemma~\ref{lem:mld-volume-minimizer} and Proposition~\ref{prop:equality-case}.

Conversely, assume that $\Delta=0$ near $x$ and analytically
$(x\in X)\cong\frac{1}{r}(1,\ldots,1)$.  It suffices to compute on
$0\in\mathbb A^n/\mu_r$, where a generator of $\mu_r$ acts diagonally with
weights $(1,\ldots,1)$.  By Lemma~\ref{lem:finite-degree},
$\volhat(x,X)=n^n/r$.  Put
$N_r:=\Z^n+\Z\frac{1}{r}(1,\ldots,1)$.  By
\cite[Section~1]{Amb06},
\[
  \mld_x(X)
  =\min_{v=(v_1,\ldots,v_n)\in N_r\cap\R_{>0}^n}
    \sum_{i=1}^n v_i
  =\frac{n}{r}.
\]
Hence equality in \eqref{eq:main} holds.
\end{proof}

\begin{proof}[Proof of Corollary~\ref{cor:mld-one}]
By Theorem~\ref{thm:main},
\[
  \volhat(x,X,\Delta)
  \le n^{n-1}\mld_x(X,\Delta)
  \le n^{n-1}.
\]
If equality holds, then $\mld_x(X,\Delta)=1$.  By Theorem~\ref{thm:main},
$\Delta=0$ near $x$ and analytically
$(x\in X)\cong\frac{1}{r}(1,\ldots,1)$.  By the toric calculation in the proof of Theorem~\ref{thm:main},
$1=\mld_x(X)=\frac{n}{r}$, so $r=n$.
Conversely, if $\Delta=0$ near $x$ and analytically
$(x\in X)\cong\frac{1}{n}(1,\ldots,1)$, then
$\mld_x(X)=1$ and $\volhat(x,X)=n^{n-1}$, so equality in
Theorem~\ref{thm:main} holds.
\end{proof}

\subsection{Discreteness}

We use the following consequence of the uniform lc rational polytope theorem
\cite[Theorem~5.6 and Lemma~5.4]{HLS26} and
Lemma~\ref{lem:index-cover}.

\begin{lemma}\label{lem:discrepancy-finiteness}
Fix an integer $n\ge2$ and a finite set $\Gamma\subset[0,1]$.  There is a set $\Lambda\subset\R_{\ge0}$, depending only on $n$ and $\Gamma$, such that $\Lambda\cap[0,M]$ is finite for every $M>0$ and the following property holds.  For every $n$-dimensional klt germ $x\in(X,\Delta)$ satisfying $\operatorname{Coeff}(\Delta)\subseteq\Gamma$, there is a finite quasi-\'etale abelian Galois morphism $f\colon\bigl(y\in(Y,\Delta_Y)\bigr)\to\bigl(x\in(X,\Delta)\bigr)$ such that $f^{-1}(x)=\{y\}$,
\[
  K_Y+\Delta_Y=f^*(K_X+\Delta),
  \qquad
  \operatorname{Coeff}(\Delta_Y)\subseteq\Gamma,
\]
and, if $N:=\deg f$, then
\[
  N\mld_x(X,\Delta)\in\Lambda\setminus\{0\}.
\]
\end{lemma}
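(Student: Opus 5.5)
We would obtain the lemma by combining the uniform lc rational polytope theorem of \cite{HLS26} with the index cover of Lemma~\ref{lem:index-cover}.

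First, fix the finite coefficient set. Write $\Gamma=\{\gamma_1,\ldots,\gamma_k\}$, and for the germ write $\Delta=\sum_j\gamma_{i(j)}\Delta_j$ with each $\Delta_j$ a prime divisor through $x$. By \cite[Theorem~5.6]{HLS26}, there are rational numbers $r_1,\ldots,r_\ell$, real numbers $a_1,\ldots,a_\ell>0$ with $\sum a_i=1$, and $\Q$-linear maps $s_i\colon\R^k\to\R^k$ with $s_i(\gamma)=(r_{i1},\ldots)$. These depend only on $n$ and $\Gamma$. They have the property that whenever $(X,\sum\gamma_{i(j)}\Delta_j)$ is lc (here klt) near $x$, each pair $(X,\Delta^{(i)})$ with $\Delta^{(i)}:=\sum r_{i,i(j)}\Delta_j$ is lc, and $K_X+\Delta=\sum a_i(K_X+\Delta^{(i)})$. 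By \cite[Lemma~5.4]{HLS26}, the divisors $K_X+\Delta^{(i)}$ and each $\Delta_j$ are then $\Q$-Cartier. Moreover their Cartier indices are not controlled, but the coefficients lie in a finite set of rationals with a common denominator $D=D(n,\Gamma)$.

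Next, apply Lemma~\ref{lem:index-cover} to the finitely many $\Q$-Cartier Weil divisors $DK_X+D\Delta^{(i)}$ and $\Delta_j$ at $x$. This gives a quasi-\'etale abelian cover $f\colon y\in Y\to x\in X$, totally ramified over $x$, on which all of them become Cartier. Set $K_Y+\Delta_Y=f^*(K_X+\Delta)$. Since $f$ is quasi-\'etale, $\Delta_Y=f^*\Delta$ has the same coefficients, lying in $\Gamma$, and the pair is klt by Lemma~\ref{lem:index-cover}. On $Y$, each $D(K_Y+\Delta_Y^{(i)})$ is Cartier, so for every prime divisor $G$ over $y$ we have $A_{Y,\Delta_Y^{(i)}}(G)\in\frac1D\Z_{\ge0}$, since it is an lc log discrepancy. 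Hence $A_{Y,\Delta_Y}(G)=\sum a_iA_{Y,\Delta_Y^{(i)}}(G)$ lies in the set $\Lambda_0:=\{\sum a_i m_i/D : m_i\in\Z_{\ge0}\}$. This set depends only on $n$ and $\Gamma$, and its intersection with $[0,M]$ is finite because all $a_i>0$.

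It remains to relate $\mld_x(X,\Delta)$ to log discrepancies upstairs. Let $E$ compute $\mld_x(X,\Delta)$, and let $G$ be a divisor over $Y$ lying over $E$ with ramification index $e\le N$. Then $A_{Y,\Delta_Y}(G)=e\,A_{X,\Delta}(E)$, which gives $N\mld_x(X,\Delta)=(N/e)A_{Y,\Delta_Y}(G)$. This is the main obstacle, since $N/e$ is not a priori bounded. We would try first to use the Galois structure: the stabilizer of $G$ is abelian, and $N/e$ is a product of the residue degree and the number of conjugates. A cleaner route is to define $\Lambda:=\{q\lambda:\ q\in\Z_{>0},\ \lambda\in\Lambda_0\}$. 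This is still finite on each $[0,M]$, because $\Lambda_0\setminus\{0\}$ has a positive minimum $\min a_i/D$, which bounds $q$. Since $N/e\in\Z_{>0}$ (because $e$ divides $|{\rm stabilizer}|$, which divides $N$), we get $N\mld_x(X,\Delta)\in\Lambda\setminus\{0\}$, the value being nonzero by klt.
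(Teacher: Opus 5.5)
This is essentially the paper's proof. You use \cite[Theorem~5.6 and Lemma~5.4]{HLS26} to write $K_X+\Delta=\sum_i a_i(K_X+\Delta^{(i)})$ with lc $\Q$-boundaries whose denominators are uniformly bounded. You then pass to the abelian cover of Lemma~\ref{lem:index-cover} that makes each $D(K_X+\Delta^{(i)})$ Cartier, and finish with $A_{Y,\Delta_Y}(G)=e\,A_{X,\Delta}(E)$ and $e\mid N$.

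\textbf{Remove the claim that each $\Delta_j$ is $\Q$-Cartier.} This is false in general. For $\Gamma=\{\frac12\}$ the hypothesis only makes $K_X+\frac12\Delta_1$ $\Q$-Cartier, and that does not make $\Delta_1$ $\Q$-Cartier when $X$ is not $\Q$-Gorenstein. So Lemma~\ref{lem:index-cover} cannot be applied to the $\Delta_j$. The error is harmless, because you never use that $f^*\Delta_j$ is Cartier. Moreover, $\operatorname{Coeff}(\Delta_Y)\subseteq\Gamma$ already follows from $f$ being quasi-\'etale.

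\textbf{The ``main obstacle'' is not an obstacle.} $\Lambda_0=\sum_i\frac{a_i}{D}\Z_{\ge0}$ is already closed under multiplication by positive integers. Hence your $\Lambda$ equals $\Lambda_0$, which is exactly the paper's choice $\Lambda=\sum_j\frac{\lambda_j}{q}\Z_{\ge0}$.
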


\begin{proof}
List the positive elements of $\Gamma$ as $\gamma_1,\ldots,\gamma_t$, and write
$\Delta=\sum_{k=1}^t\gamma_kB_k$, where the $B_k$ are reduced and have no common irreducible components.  If $t=0$, take $s=q=1$, $\lambda_1=1$, and $\Delta^{(1)}=0$.  Otherwise, let $V\subset\R^t$ be the smallest affine subspace defined over $\Q$ that contains $(\gamma_1,\ldots,\gamma_t)$.  By \cite[Theorem~5.6 and Lemma~5.4]{HLS26}, one may choose rational points in a fixed neighborhood of $(\gamma_1,\ldots,\gamma_t)$ in $V$ whose convex hull contains $(\gamma_1,\ldots,\gamma_t)$ in its relative interior.  Thus there are positive real numbers $\lambda_1,\ldots,\lambda_s$ with sum one, an integer $q>0$, and lc $\Q$-boundaries $\Delta^{(1)},\ldots,\Delta^{(s)}$ such that
\[
  \Delta=\sum_{j=1}^s\lambda_j\Delta^{(j)},
  \qquad
  q(K_X+\Delta^{(j)})
  \text{ is an integral $\Q$-Cartier Weil divisor}
\]
for every $j$.  The numbers $s,q,\lambda_1,\ldots,\lambda_s$ depend only on $n$ and $\Gamma$.

Apply Lemma~\ref{lem:index-cover} simultaneously to the divisors $q(K_X+\Delta^{(j)})$.  Let $f\colon(y\in Y)\to(x\in X)$ be the resulting cover and put $N:=\deg f$.  Define $\Delta_Y^{(j)}$ and $\Delta_Y$ by $K_Y+\Delta_Y^{(j)}=f^*(K_X+\Delta^{(j)})$ and $K_Y+\Delta_Y=f^*(K_X+\Delta)$.  Then $\Delta_Y=\sum_j\lambda_j\Delta_Y^{(j)}$, each $q(K_Y+\Delta_Y^{(j)})$ is Cartier, and the coefficients of $\Delta_Y$ belong to $\Gamma$.

Choose a prime divisor $E$ computing $\mld_x(X,\Delta)$ and a prime divisor $G$ over $Y$ lying over $E$.  If $e$ is its ramification index, then $e\mid N$.  Since each $(Y,\Delta_Y^{(j)})$ is lc, put $b_j:=qA_{Y,\Delta_Y^{(j)}}(G)\in\Z_{\ge0}$.  By \cite[Proposition~5.20]{KM98},
$A_{X,\Delta^{(j)}}(E)=b_j/(qe)$.  Since
$\Delta=\sum_j\lambda_j\Delta^{(j)}$, we have
\[
  \begin{aligned}
  N\mld_x(X,\Delta)
  &=N A_{X,\Delta}(E)\\
  &=N\sum_{j=1}^s\lambda_jA_{X,\Delta^{(j)}}(E)\\
  &=\frac{1}{q}\sum_{j=1}^s\lambda_j\frac{N}{e}\,b_j.
  \end{aligned}
\]
Thus we may take $\Lambda:=\sum_{j=1}^s\frac{\lambda_j}{q}\Z_{\ge0}$.  Since the generators $\lambda_j/q$ are positive, $\Lambda\cap[0,M]$ is finite for every $M>0$.
\end{proof}

For $c>0$, we use the following notation.

\begin{definition}\label{def:ratio-c}
For an integer $n\ge2$, a set $\Gamma\subset[0,1]$, and a real number
$c>0$, set
\[
  \mathcal R^{(c)}_{n,\Gamma}
  :=\left\{
  \frac{\volhat(x,X,\Delta)}{\mld_x(X,\Delta)^c}
  \ \middle|\ 
  \begin{array}{c}
  x\in(X,\Delta)\text{ is an }n\text{-dimensional klt germ},\\[-2pt]
  \operatorname{Coeff}(\Delta)\subseteq\Gamma
  \end{array}
  \right\}.
\]
Thus $\mathcal R^{(1)}_{n,\Gamma}=\mathcal R_{n,\Gamma}$.
\end{definition}

\begin{proposition}\label{prop:acc-threshold}
Fix an integer $n\ge2$ and a finite set $\Gamma\subset[0,1]$.  For every
$0<c\le1$ and every $\varepsilon>0$, the set
\[
  \mathcal R^{(c)}_{n,\Gamma}\cap[\varepsilon,\infty)
\]
is finite.  For every $c>1$, the set $\mathcal R^{(c)}_{n,\Gamma}$ is
unbounded and hence does not satisfy the ACC.  Thus $c=1$ is the largest
positive exponent for which the ACC holds.
\end{proposition}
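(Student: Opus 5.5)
Fix $0<c\le1$ and $\varepsilon>0$. For a germ $x\in(X,\Delta)$ with $\operatorname{Coeff}(\Delta)\subseteq\Gamma$, apply Lemma~\ref{lem:discrepancy-finiteness} to get a quasi-\'etale abelian cover $f\colon y\in(Y,\Delta_Y)\to x\in(X,\Delta)$ of degree $N$ with $\lambda:=N\mld_x(X,\Delta)\in\Lambda\setminus\{0\}$. By Lemma~\ref{lem:finite-degree}, $\volhat(y,Y,\Delta_Y)=N\volhat(x,X,\Delta)$, and hence
\[
  \frac{\volhat(x,X,\Delta)}{\mld_x(X,\Delta)^c}
  =\frac{\volhat(y,Y,\Delta_Y)}{N}\cdot\frac{N^c}{\lambda^c}
  =\frac{\volhat(y,Y,\Delta_Y)}{N^{1-c}\lambda^c}.
\]

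Now assume the ratio is at least $\varepsilon$. First, $\volhat(y,Y,\Delta_Y)\le n^n$ by Lemma~\ref{lem:local-upper}, and $N^{1-c}\ge1$. This gives $\lambda^c\le n^n/\varepsilon$, so $\lambda$ lies in the finite set $\Lambda\cap[0,(n^n/\varepsilon)^{1/c}]$.

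Next, $\lambda$ is bounded below by $\lambda_{\min}:=\min(\Lambda\setminus\{0\})>0$. For $c<1$ this yields $N^{1-c}\le n^n/(\varepsilon\lambda_{\min}^c)$, which bounds $N$. For $c=1$ we do not use $N$ at all: the expression becomes $\volhat(y)/\lambda$.

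In both cases we must also show that $\volhat(y,Y,\Delta_Y)$ takes only finitely many values. The bound $\volhat(y,Y,\Delta_Y)\ge\varepsilon N^{1-c}\lambda^c\ge\varepsilon\lambda_{\min}^c$ keeps it away from zero. The coefficients of $\Delta_Y$ lie in the finite set $\Gamma$. The discreteness of local volumes away from zero for fixed dimension and finite coefficient set then gives finiteness: this is \cite[Theorem~1.2]{XZ24}, in its version for pairs with finite coefficients. The ratio is therefore a function of finitely many values of $\volhat(y)$, $N$ and $\lambda$, so it takes finitely many values. This discreteness input is the main obstacle: one must make sure the cited result covers pairs with $\mathbb R$-coefficients in a finite set. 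If it does not, I would reduce to the $\Q$-case via the decomposition $\Delta_Y=\sum_j\lambda_j\Delta_Y^{(j)}$ used in the proof of Lemma~\ref{lem:discrepancy-finiteness}.

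For $c>1$, take $\Delta=0$ and $(x\in X)=\frac1r(1,\ldots,1)$. The computation in the proof of Theorem~\ref{thm:main} gives $\volhat=n^n/r$ and $\mld=n/r$. The ratio is then $n^{n-c}r^{c-1}$, which tends to $\infty$ as $r\to\infty$. The strictly increasing sequence $n^{n-c}r^{c-1}$ violates the ACC, so $c=1$ is the largest exponent for which the ACC holds.
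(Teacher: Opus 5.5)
Your proof is correct and is essentially the paper's argument. It uses the same cover from Lemma~\ref{lem:discrepancy-finiteness} and the identity $\volhat(x,X,\Delta)/\mld_x(X,\Delta)^c=\volhat(y,Y,\Delta_Y)/(N^{1-c}\lambda^c)$. It then bounds the possible values of $\lambda$, and of $N$ when $c<1$, applies discreteness of local volumes \cite[Theorem~1.2]{XZ24} on an interval bounded away from zero, and uses the quotients $\frac1r(1,\ldots,1)$ for $c>1$.

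The paper applies \cite[Theorem~1.2]{XZ24} directly to pairs with coefficients in the finite set $\Gamma\subset[0,1]$, so no reduction is needed. Your suggested fallback through $\Delta_Y=\sum_j\lambda_j\Delta_Y^{(j)}$ would not be straightforward anyway, since $\volhat(y,Y,\Delta_Y)$ is not determined by the lc pairs $(Y,\Delta_Y^{(j)})$.
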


\begin{proof}
Fix $0<c\le1$ and $\varepsilon>0$, and let $x\in(X,\Delta)$ be an
$n$-dimensional klt germ with $\operatorname{Coeff}(\Delta)\subseteq\Gamma$
such that $\volhat(x,X,\Delta)/\mld_x(X,\Delta)^c\ge\varepsilon$.
Apply Lemma~\ref{lem:discrepancy-finiteness}, and let $f\colon\bigl(y\in(Y,\Delta_Y)\bigr)\to\bigl(x\in(X,\Delta)\bigr)$ be the resulting morphism.  Put $N:=\deg f$ and
$a:=N\mld_x(X,\Delta)\in\Lambda\setminus\{0\}$.  By
Lemma~\ref{lem:finite-degree},
\[
  \frac{\volhat(x,X,\Delta)}{\mld_x(X,\Delta)^c}
  =\frac{\volhat(y,Y,\Delta_Y)}{a^cN^{1-c}}.
\]
Since $\volhat(y,Y,\Delta_Y)\le n^n$, we have
$a^cN^{1-c}\le\frac{n^n}{\varepsilon}$.  As $N\ge1$, this gives $a\le(n^n/\varepsilon)^{1/c}$.  Since
$\Lambda\cap[0,(n^n/\varepsilon)^{1/c}]$ is finite, only finitely many values of $a$ occur.  If $0<c<1$, then
for each such $a$ the displayed inequality bounds the positive integer $N$.
If $c=1$, the factor $N^{1-c}$ is equal to one.  Thus in either case
$a^cN^{1-c}$ takes only finitely many values.  For each of these values,
$\varepsilon a^cN^{1-c}\le\volhat(y,Y,\Delta_Y)\le n^n$.
The coefficients of $\Delta_Y$ belong to $\Gamma$.  By
\cite[Theorem~1.2]{XZ24}, only finitely many local volumes occur in each
such interval.  Hence
$\mathcal R^{(c)}_{n,\Gamma}\cap[\varepsilon,\infty)$ is finite.

Now let $c>1$.  For $r\ge1$, set
$(0\in X_r):=\frac{1}{r}(1,\ldots,1)$.  By the computation in the proof of Theorem~\ref{thm:main},
\[
  \volhat(0,X_r)=\frac{n^n}{r},
  \qquad
  \mld_0(X_r)=\frac{n}{r}.
\]
Consequently,
\[\frac{\volhat(0,X_r)}{\mld_0(X_r)^c}
  =n^{n-c}r^{c-1},\]
which tends to $+\infty$ as $r\to\infty$.
\end{proof}

\begin{proof}[Proof of Theorem~\ref{thm:ratio-acc}]
Take $c=1$ in Proposition~\ref{prop:acc-threshold}.
\end{proof}

\begin{remark}\label{rem:semicontinuity}
The local volume is lower semicontinuous in $\Q$-Gorenstein flat
families of klt singularities \cite[Theorem~1]{BL21}, but the quotient
$\volhat/\mld$ is neither lower nor upper semicontinuous in general.
For upper semicontinuity, consider the flat hypersurface family
\[
  (0\in X_t):=\bigl(x_1^2+x_2^2+x_3^2+x_4^2+tx_4=0\bigr).
\]
For $t\ne0$, the germ $0\in X_t$ is smooth, while $0\in X_0$ is a
three-dimensional ordinary double point.  The corresponding values of
$\volhat/\mld$ are $27/3=9$ and $16/2=8$, respectively; see
Lemma~\ref{lem:local-upper} and \cite[Example~5.2]{Liu25}.

For lower semicontinuity, one may take
\[
  (0\in Y_t):=\bigl(x^2+y^3+u^6+v^7+tu^5=0\bigr).
\]
The special fiber is of Brieskorn--Pham type $(2,3,6,7)$, while for
$t\ne0$ the germ is analytically of type $(2,3,5,7)$.  The local volumes are
given by \cite[Theorem~1.4 and Example~2.12]{LST25}.  Using Ambro's inversion
of adjunction for nondegenerate hypersurfaces \cite[Main Theorem]{Amb03}, one
computes the minimal log discrepancies.  Thus
\[
  \bigl(\mld_0(Y_0),\volhat(0,Y_0)\bigr)
  =\left(1,\frac{36}{49}\right),
  \qquad
  \bigl(\mld_0(Y_t),\volhat(0,Y_t)\bigr)
  =\left(2,\frac{37^3}{210^2}\right)
\]
for $t\ne0$.  Since
$36/49>37^3/(2\cdot210^2)$, the quotient is not lower semicontinuous.

A stable degeneration associated with a normalized volume minimizer
preserves the local volume \cite[Theorem~1.2]{XZ25}.  Thus a strict drop
of the minimal log discrepancy in such a degeneration gives another
source of failure of lower semicontinuity.
\end{remark}

\section{Applications and questions}\label{sec:applications}

\subsection{Minimal log discrepancies of log Fano pairs}

\begin{proof}[Proof of Corollary~\ref{cor:kmoduli}]
By Lemma~\ref{lem:local-global} and Theorem~\ref{thm:main},
\[
  \left(\frac{n}{n+1}\right)^n
  \delta(X,\Delta)^n\bigl(-(K_X+\Delta)\bigr)^n
  \le \volhat(x,X,\Delta)
  \le n^{n-1}\mld_x(X,\Delta).
\]
This proves \eqref{eq:kmoduli-mld}.  If
$(X,\Delta)$ is K-semistable, then $\delta(X,\Delta)\ge1$ by
\cite[Theorem~2.6]{LZ24}.
\end{proof}

\subsection{Threefold log Fano pairs}

Liu classified the threefold singularities needed below in \cite[Theorem~1.2 and Propositions~4.3--4.4]{Liu25}.  We apply these results to the underlying variety by comparing $\volhat(x,X)$ with $\volhat(x,X,\Delta)$.  The assumption that $X$ is $\Q$-Gorenstein is needed to define $\volhat(x,X)$ and the index-one cover of $K_X$.

\begin{corollary}\label{cor:threefold-boundary}
Let $(X,\Delta)$ be a three-dimensional log Fano pair, and assume that $X$ is $\Q$-Gorenstein.  Set $V:=\bigl(-(K_X+\Delta)\bigr)^3$ and
$V_\delta:=\delta(X,\Delta)^3V$.  Then the following statements hold, with the terminology of
\cite[Definitions~2.12--2.15]{Liu25}.

\begin{enumerate}
\item If $V_\delta\ge26$, then every singular point of $X$ is either a
$\mathrm{cA}_1$ singularity or a quotient singularity of type
$\frac{1}{2}(1,1,1)$.
\item If $V_\delta\ge22$, then every singular point of $X$ is a
$\mathrm{cA}_1$ singularity, an isolated $\mathrm{cA}_2$ singularity, a
$D_\infty$ singularity, or a quotient singularity of type
$\frac{1}{2}(1,1,1)$.
\item If $V_\delta\ge11$, then every non-Gorenstein point of $X$ is a cyclic
quotient of a possibly smooth $\mathrm{cA}_{\le2}$ hypersurface
singularity.
\end{enumerate}

If $X$ is $\Q$-Gorenstein smoothable, then the quotient singularity
$\frac{1}{2}(1,1,1)$ does not occur.  If $(X,\Delta)$ is K-semistable, the same
conclusions hold with $V$ in place of $V_\delta$ in the three numerical
hypotheses.
\end{corollary}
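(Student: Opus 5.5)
The plan is to combine Corollary~\ref{cor:kmoduli}, or more precisely the chain of inequalities in its proof, with Liu's threefold classification in \cite[Theorem~1.2 and Propositions~4.3--4.4]{Liu25}. That classification is phrased in terms of $\volhat(x,X)$ for the underlying variety.

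Fix a closed point $x\in X$. By Lemma~\ref{lem:local-global} with $d=3$,
\[
  \volhat(x,X,\Delta)\ge\tfrac{27}{64}V_\delta .
\]
Since $X$ is $\Q$-Gorenstein, $\volhat(x,X)$ is defined. Because $\Delta\ge0$, we have $A_{X}(v)\ge A_{X,\Delta}(v)$ for every $v\in\Val_{X,x}$, so
\[
  \volhat_{X}(v)\ge\volhat_{X,\Delta}(v)
  \quad\text{and hence}\quad
  \volhat(x,X)\ge\volhat(x,X,\Delta)\ge\tfrac{27}{64}V_\delta .
\]
Substituting the thresholds gives:
\begin{itemize}
\item[] $V_\delta\ge26$ yields $\volhat(x,X)\ge 27\cdot26/64=351/32$;
\item[] $V_\delta\ge22$ yields $\volhat(x,X)\ge 297/32$;
\item[] $V_\delta\ge11$ yields $\volhat(x,X)\ge 297/64$.
\end{itemize}
We then read off Liu's lists of threefold singularities whose local volume exceeds (or attains) these bounds.

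The real input from the present paper comes in separating the quotient and non-Gorenstein cases, where Liu's statements are formulated with mld or index conditions. Here Theorem~\ref{thm:main} gives $\mld_x(X,\Delta)\ge \volhat(x,X,\Delta)/9$. Combined with $\mld_x(X)\ge\mld_x(X,\Delta)$, this yields a lower bound on $\mld_x(X)$:
\[
  \mld_x(X)\ge \tfrac{3}{64}V_\delta .
\]
For example, $V_\delta\ge26$ gives $\mld_x(X)\ge 39/32>1$. This excludes canonical Gorenstein non-hypersurface points (which have mld $1$) and most quotient points. Among those that survive, $\frac12(1,1,1)$ has volume $27/2$ and mld $3/2$. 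For a non-Gorenstein point of index $r$, the index-one cover together with Lemma~\ref{lem:finite-degree} gives
\[
  \volhat(x,X)=\volhat(\tilde x,\tilde X)/r\le 27/r .
\]
This bounds $r$ and the cover type. I would then match each case against Liu's Propositions~4.3--4.4, which classify exactly these ranges.

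For the smoothable remark, a $\Q$-Gorenstein smoothing deforms $\frac12(1,1,1)$. That singularity is rigid by Schlessinger's theorem \cite[Theorem~3]{Sch71} as an isolated quotient singularity of dimension at least $3$, so it cannot occur. For the K-semistable case, $\delta(X,\Delta)\ge1$ by \cite[Theorem~2.6]{LZ24}, so $V_\delta\ge V$.

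The main obstacle is the bookkeeping against Liu's precise thresholds, in particular verifying that his numerical hypotheses (stated for $\volhat$ of the variety, or via $\mld$) are implied by the bounds above in each of the three ranges.
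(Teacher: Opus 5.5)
Your reduction $\volhat(x,X)\ge\volhat(x,X,\Delta)\ge\frac{27}{64}V_\delta$ matches the paper, and so do your handling of the smoothability remark (Schlessinger rigidity) and of the K-semistable case ($\delta\ge1$). For (1) and (2), the ``bookkeeping'' you defer is only $351/32>32/3$ and $297/32>9$. The paper applies Liu's Theorem~1.2 and Propositions~4.3--4.4 purely through these local-volume thresholds.

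The detour through Theorem~\ref{thm:main} and $\mld_x(X)\ge\frac{3}{64}V_\delta$ plays no role in the paper's proof of this corollary, and it is not needed. An mld bound alone cannot separate the cases anyway. For instance, $\frac13(1,1,2)$ has $\mld=4/3>39/32$ but $\volhat=9<351/32$, so you must fall back on the volume thresholds regardless.

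The genuine gap is in (3). There you use Lemma~\ref{lem:finite-degree} in the wrong direction. The bound $\volhat(x,X)=\volhat(\tilde x,\tilde X)/r\le 27/r$ only bounds $r$ from above and says nothing about the type of $\tilde x$. You also cannot match against Liu's Propositions~4.3--4.4 at $x$ itself, since you only know $\volhat(x,X)\ge 297/64$, far below their thresholds.

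The missing step is a lower bound for the cover. Because $x$ is non-Gorenstein, $r\ge2$, hence $\volhat(\tilde x,\tilde X)=r\volhat(x,X)\ge 297/32>9$. This is exactly what the hypothesis $V_\delta\ge11$ is designed for: it gives $\volhat(x,X)>9/2$. The argument then runs as follows.
\begin{enumerate}
\item The index-one cover $\tilde x\in\tilde X$ is Gorenstein and klt, hence canonical.
\item By Liu's Theorem~1.1, a Gorenstein canonical threefold point that is not a hypersurface singularity has $\volhat\le9$. So $\tilde x$ is smooth or a hypersurface singularity.
\item Liu's Theorem~1.2, used as in the proof of his Theorem~1.3(3), then shows that $\tilde x$ is $\mathrm{cA}_{\le2}$.
\item Since $x$ is the quotient of $\tilde x$ by the cyclic group $\mu_r$, this gives (3).
\end{enumerate}
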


\begin{proof}
Let $x\in X$ be a closed point.  Since $\Delta\ge0$, we have
$A_X(v)\ge A_{X,\Delta}(v)$ for every $v\in\Val_{X,x}$.
Together with Lemma~\ref{lem:local-global}, we have
\[
  \volhat(x,X)
  \ge\volhat(x,X,\Delta)
  \ge\frac{27}{64}V_\delta.
\]

If $V_\delta\ge26$, then
$\volhat(x,X)\ge351/32>32/3$, so (1) follows from
\cite[Theorem~1.2 and Proposition~4.3]{Liu25}.  If $V_\delta\ge22$, then
$\volhat(x,X)\ge297/32>9$, so (2) follows from
\cite[Theorem~1.2 and Proposition~4.4]{Liu25}.

Assume that $V_\delta\ge11$ and that $x\in X$ is non-Gorenstein.  Then
$\volhat(x,X)\ge297/64>9/2$.  Let $(\widetilde x\in\widetilde X)\to(x\in X)$ be the index-one cover of $K_X$ of degree $r\ge2$.  By Lemma~\ref{lem:finite-degree},
$\volhat(\widetilde x,\widetilde X)=r\volhat(x,X)>9$.
Now (3) follows from \cite[Proof of Theorem~1.3(3)]{Liu25}, using
\cite[Theorems~1.1 and~1.2]{Liu25}.  The smoothability assertion follows
from Schlessinger's rigidity theorem \cite[Theorem~3]{Sch71}; see also
\cite[Proof of Theorem~1.3]{Liu25}.  Finally, if $(X,\Delta)$ is
K-semistable, then $\delta(X,\Delta)\ge1$, and hence $V_\delta\ge V$.
\end{proof}

The main inequality also restricts the order of the boundary along divisors
centered at the given point.

\begin{corollary}\label{cor:boundary-restriction}
Let $(X,\Delta)$ be a three-dimensional log Fano pair, assume that $X$ is $\Q$-Gorenstein, and set $V:=\bigl(-(K_X+\Delta)\bigr)^3$ and
$V_\delta:=\delta(X,\Delta)^3V$.  Let $x\in X$ be a closed point and let $F$ be a prime divisor over $X\ni x$.  Then
\begin{equation}\label{eq:boundary-order}
  \ord_F\Delta
  \le A_X(F)-\frac{3}{64}V_\delta.
\end{equation}
In particular, the following statements hold.
\begin{enumerate}
\item If $x\in X$ is smooth, then
\[
  \operatorname{mult}_x\Delta
  \le3-\frac{3}{64}V_\delta.
\]
\item If $x\in X\cong\frac{1}{r}(1,1,1)$ analytically, then
$rV_\delta\le64$.  If equality holds, then $\Delta=0$ near $x$.
\end{enumerate}
\end{corollary}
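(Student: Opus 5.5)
We obtain \eqref{eq:boundary-order} by chaining Lemma~\ref{lem:local-global}, Theorem~\ref{thm:main}, and the definition of the minimal log discrepancy. For the prime divisor $F$ over $X\ni x$ we have
\[
  A_{X,\Delta}(F)=A_X(F)-\ord_F\Delta,
\]
which is valid because $X$ is $\Q$-Gorenstein, so $K_X$ and hence $\Delta$ are $\R$-Cartier and $\ord_F\Delta$ makes sense. Since $F$ is centered at $x$, we get $A_{X,\Delta}(F)\ge\mld_x(X,\Delta)$. Theorem~\ref{thm:main} with $n=3$ gives $\volhat(x,X,\Delta)\le9\,\mld_x(X,\Delta)$. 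Lemma~\ref{lem:local-global} with $d=3$ gives $\volhat(x,X,\Delta)\ge\frac{27}{64}V_\delta$. Combining these,
\[
  A_X(F)-\ord_F\Delta\ge\mld_x(X,\Delta)\ge\frac{1}{9}\cdot\frac{27}{64}V_\delta=\frac{3}{64}V_\delta,
\]
which is \eqref{eq:boundary-order}.

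For (1), we take $F$ to be the exceptional divisor of the blow-up of the smooth point $x$. Then $A_X(F)=3$ and $\ord_F\Delta=\operatorname{mult}_x\Delta$, and the bound follows directly from \eqref{eq:boundary-order}.

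For (2), rather than using a particular $F$, we argue directly with the invariants of the pair. By the computation in the proof of Theorem~\ref{thm:main}, $\mld_x(X)=3/r$, and this value is computed by the divisor $F$ extracted by the weighted blow-up with weights $\frac1r(1,1,1)$, for which $A_X(F)=3/r$. Applying \eqref{eq:boundary-order} to this $F$ gives $0\le\ord_F\Delta\le 3/r-\frac{3}{64}V_\delta$, hence $rV_\delta\le64$.

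If $rV_\delta=64$, then $\ord_F\Delta=0$ for this $F$. This alone does not show $\Delta=0$ near $x$, because a component of $\Delta$ through $x$ has positive order along $F$. Indeed, $F$ is centered at $x$, so any effective $\R$-Cartier divisor through $x$ has positive order along $F$; this already forces $\Delta=0$ near $x$. As an alternative route to the same conclusion, note that all inequalities in the chain are then equalities, in particular $\volhat(x,X,\Delta)=9\,\mld_x(X,\Delta)$, and the equality case of Theorem~\ref{thm:main} gives $\Delta=0$ near $x$. The first argument is the more direct one.

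The main point requiring care is the choice of $F$ in (2), namely confirming that the toric divisor with log discrepancy $3/r$ is centered at $x$ and computes $\mld_x(X)$ through the analytic isomorphism. This identification is already supplied by the toric computation in the proof of Theorem~\ref{thm:main}.
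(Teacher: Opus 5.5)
Your proof is correct and follows the paper's argument: the chain $A_X(F)-\ord_F\Delta=A_{X,\Delta}(F)\ge\mld_x(X,\Delta)\ge\frac{3}{64}V_\delta$ is Corollary~\ref{cor:kmoduli}, which you re-derive inline from Lemma~\ref{lem:local-global} and Theorem~\ref{thm:main}, and parts (1) and (2) use the same divisors and the same positivity of $\ord_F$ on components of $\Delta$ through $x$. In (2) the paper simply takes any prime divisor computing $\mld_x(X)=3/r$ (such a divisor exists, as recalled in Section~\ref{sec:preliminaries}), which avoids transporting the toric divisor through the analytic isomorphism; you should also delete the sentence saying $\ord_F\Delta=0$ ``alone does not show'' $\Delta=0$, because it contradicts the correct reasoning that immediately follows it.
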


\begin{proof}
By Corollary~\ref{cor:kmoduli},
\[
  A_X(F)-\ord_F\Delta
  =A_{X,\Delta}(F)
  \ge\mld_x(X,\Delta)
  \ge\frac{3}{64}V_\delta,
\]
which proves \eqref{eq:boundary-order}.  If $x$ is smooth, let $F$ be the
exceptional divisor of the ordinary blow-up.  Then $A_X(F)=3$ and
$\ord_F\Delta=\operatorname{mult}_x\Delta$, so (1) follows.

Suppose that analytically
$(x\in X)\cong\frac{1}{r}(1,1,1)$.  By \cite[Section~1]{Amb06},
$\mld_x(X)=\mld_0(\mathbb A^3/\mu_r)=\frac{3}{r}$.  Let $F$ be a prime
divisor computing $\mld_x(X)$.  Applying \eqref{eq:boundary-order} to $F$ gives
$\frac{3}{64}V_\delta\le\frac{3}{r}$, hence $rV_\delta\le64$.

Assume that $rV_\delta=64$.  Applying \eqref{eq:boundary-order} to $F$ gives
$0\le\ord_F\Delta\le\frac{3}{r}-\frac{3}{64}V_\delta=0$.
Every component of the effective boundary passing through $x$ has positive
order along this divisor.  After replacing $X$ by a sufficiently small
neighborhood of $x$, we may assume that every component of $\Delta$ contains
$x$.  Thus $\Delta=0$ near $x$.
\end{proof}

\subsection{Questions}

For DCC coefficient sets, the set of local volumes satisfies the ACC
\cite[Theorem~1.2]{HLQ24}.  This leads to the following question.

\begin{question}\label{ques:dcc-ratio}
Fix an integer $n\ge2$ and a DCC set $\Gamma\subset[0,1]$.  Does
$\mathcal R_{n,\Gamma}$ satisfy the ACC?
\end{question}

Theorem~\ref{thm:ratio-acc} implies that the second-largest value of
$\mathcal R_{n,\{0\}}$ exists.  We expect the following explicit value.

\begin{question}\label{ques:second-ratio}
For every integer $n\ge2$, the second-largest value of $\mathcal R_{n,\{0\}}$
is
\[
  \begin{cases}
    \dfrac{4}{3}, & n=2,\\[4pt]
    2(n-1)^{n-1}, & n=3,4,\\[4pt]
    \dfrac{n^n}{n+1}, & n\ge5.
  \end{cases}
\]
\end{question}

It is also natural to ask whether the following weighted sum is uniformly
bounded.

\begin{question}\label{ques:discrepancy-upper}
Fix an integer $n\ge2$ and a finite set $\Gamma\subset[0,1]$.  Does there exist
a constant $C_{n,\Gamma}<+\infty$ such that every $n$-dimensional klt germ
$x\in(X,\Delta)$ with $\operatorname{Coeff}(\Delta)\subseteq\Gamma$
satisfies
\[
  \volhat(x,X,\Delta)
  \sum_{\substack{
    E\text{ a prime divisor over }X\ni x\\
    A_{X,\Delta}(E)<\mld_x(X,\Delta)+1}}
  \frac{1}{A_{X,\Delta}(E)}
  \le C_{n,\Gamma}.
\]
\end{question}

\end{document}